\newtheorem{theorem}{Theorem}[section]
\newtheorem{lemma}[theorem]{Lemma}
\newtheorem{corollary}[theorem]{Corollary}
\newtheorem{remark}[theorem]{Remark}
\newtheorem{conjecture}[theorem]{Conjecture}
\begin{document}
\title{$M$-alternating Hamilton paths and $M$-alternating Hamilton cycles\thanks{Work supported by
the National Science Foundation of China and Scientific Research
Foundation of Guangdong Industry Technical College.}}
\author{Zan-Bo Zhang$^1$$^2$\thanks{Corresponding author. Email address:
eltonzhang2001@yahoo.com.cn.}, Yueping Li$^2$, Dingjun Lou$^2$,
\\ $^1$Department of Computer Engineering, Guangdong Industry Technical College \\ Guangzhou 510300, China \\
$^2$Department of Computer Science, Sun Yat-sen University \\
Guangzhou 510275, China }
\date{}
\maketitle
\begin{abstract}
We study $M$-alternating Hamilton paths and $M$-alternating
Hamilton cycles in a simple connected graph $G$ on $\nu$ vertices
with a perfect matching $M$. Let $G$ be a bipartite graph, we
prove that if for any two vertices $x$ and $y$ in different parts
of $G$, $d(x)+d(y)\geq \nu/2+2$, then $G$ has an $M$-alternating
Hamilton cycle. For general graphs, a condition for the existence
of an $M$-alternating Hamilton path starting and ending with edges
in $M$ is put forward. Then we prove that if $\kappa(G)\geq\nu/2$,
where $\kappa(G)$ denotes the connectivity of $G$, then $G$ has an
$M$-alternating Hamilton cycle or belongs to one class of
exceptional graphs. Lou and Yu \cite{LY} have proved that every
$k$-extendable graph $H$ with $k\geq\nu/4$ is bipartite or
satisfies $\kappa(H)\geq 2k$. Combining this result with those we
obtain we prove the existence of $M$-alternating Hamilton cycles
in $H$.
$\newline$ $\newline$\noindent\textbf{Key words}: degree sum,
connectivity, perfect matching, $M$-alternating path,
$M$-alternating cycle, $k$-extendable
\end{abstract}
\section{Introduction, terminologies and preliminary results}
All graphs considered in this paper are finite, undirected,
connected and simple. For the terminologies and notations not
defined in this paper, the reader is referred to \cite{BM1}.

Let $G$ be a graph with vertex set $V(G)$ and edge set $E(G)$. We
denote by $\nu$ or $|G|$ the order of $V(G)$, $\kappa$ the
connectivity of $G$, and $\delta$ the minimum degree of $G$. For
$u\in V(G)$, we denote by $d(u)$ the degree of $u$ and $N(u)$ the
set of neighbors of $u$ in $G$. For a subgraph $H$ of $G$ and a
vertex set $U\subseteq V(G-H)$, we denote by $N_{H}(U)$, or
$N_{H}(u)$ if $U$ contains only one vertex $u$, the set of
neighbors of $U$ in $H$. For any two disjoint vertex sets $X$, $Y$
in $G$ we denote by $e(X,Y)$ the number of edges of $G$ from $X$
to $Y$.

Let $C=u_{0}u_{1}\ldots u_{m-1}u_{0}$ be a cycle in $G$.
Throughout this paper, the subscripts of $u_{i}$ will be reduced
modulo $m$. We always orient $C$ such that $u_{i+1}$ is the
successor of $u_{i}$. Let $U\subseteq V(C)$ , the set of
predecessors and successors of $U$ on $C$ is denoted by $U^{-}$
and $U^{+}$ respectively, or $u^{-}$ and $u^{+}$ when $U$ contains
only one vertex $u$. For $0\leq i,j\leq m-1$, the path
$u_{i}u_{i+1}\ldots u_{j}$ is denoted by $u_{i}C^{+}u_{j}$, while
the path $u_{i}u_{i-1}\ldots u_{j}$ is denoted by
$u_{i}C^{-}u_{j}$. For a path $P=v_{0}v_{1}\ldots v_{q-1}$ and
$0\leq i,j\leq q-1$, the segment of $P$ from $v_{i}$ to $v_{j}$ is
denoted by $v_{i}Pv_{j}$.

A \emph{matching} $M$ of $G$ is a subset of $E(G)$ in which no two
elements are adjacent. If every $v\in V(G)$ is covered by an edge
in $M$ then $M$ is said to be a \emph{perfect matching} of $G$. An
\emph{$M$-alternating path} $P$ is a path of which the edges
appear alternately in $M$ and $E(G)\backslash M$. An
\emph{$M$-alternating cycle} $C$ is a cycle of which the edges
appear alternately in $M$ and $E(G)\backslash M$. We call an edge
in a matching $M$ or an $M$-alternating path starting and ending
with edges in $M$ a \emph{closed $M$-alternating path}, while an
edge in $E(G)\backslash M$ or an $M$-alternating path starting and
ending with edges in $E(G)\backslash M$ an \emph{open
$M$-alternating path}. An $M$-alternating path whose starting and
ending vertices are not covered by $M$ are called an
\emph{$M$-augmenting path}.

A graph $G$ is said to be \emph{$k$-extendable} for $0\leq k\leq
(\nu-2)/2$ if there exists a matching of size $k$ in $G$, and any
such matching is contained in a perfect matching of $G$. The
concept of $k$-extendable was introduced by Plummer in \cite{P1}.
In the same paper a relationship between extendability and
connectivity is showed.

\begin{theorem}\label{lm11}  If $G$ is a $k$-extendable
graph, then $\kappa\geq k+1$. \end{theorem}

When $k$ is large and $G$ is not bipartite, the lower bound of
connectivity can be raised.

\begin{theorem}[\textmd{Lou and Yu \cite{LY}}] \label{lm12}  If $G$
is a $k$-extendable graph with $k\geq \nu/4$, then either $G$ is
bipartite or $\kappa\geq 2k$. \end{theorem}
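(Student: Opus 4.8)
The plan is to prove the contrapositive: assuming $G$ is $k$-extendable with $k \geq \nu/4$ and that $\kappa \leq 2k-1$, I would show that $G$ must be bipartite. Since a $k$-extendable graph has a perfect matching, $\nu$ is even, and by Theorem~\ref{lm11} we already know $\kappa \geq k+1$, so the working hypothesis confines us to the range $k+1 \leq \kappa \leq 2k-1$ (in particular $k \geq 2$). I would fix a minimum separating set $S$ with $|S| = \kappa$ and let $H_1,\dots,H_t$ (with $t \geq 2$) be the components of $G-S$. Because $S$ is a \emph{minimum} cut of size $\kappa$, a standard argument shows every vertex of $S$ has a neighbour in each $H_i$: if some $v \in S$ missed $H_i$ entirely, then $S\setminus\{v\}$ would still separate $H_i$ from the rest, contradicting minimality. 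Hence $N(H_i) = S$ for all $i$, and there are no edges between distinct components.

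The heart of the argument is to use $k$-extendability in its contrapositive form: if I can exhibit a matching $F$ with $|F| = k$ for which $G - V(F)$ has no perfect matching, I obtain a contradiction. The idea is to turn a putative odd cycle of $G$ (which exists exactly when $G$ is not bipartite) into such an $F$. Since the $H_i$ are pairwise separated by $S$, an odd cycle either lives inside a single component or threads through $S$; in either case it supplies the parity flexibility needed to select edges incident to $S$, together with a few interior edges, forming a matching of size exactly $k$ whose deletion leaves some component of $G - V(F)$ with an odd number of vertices. That component is then an odd, isolated piece, so by Tutte's condition $G - V(F)$ has no perfect matching. The hypothesis $k \geq \nu/4$ enters here quantitatively: it forces $|S| \leq 2k-1$ to be small compared with $\nu$, so that outside $S$ there remain enough vertices to pad $F$ up to size $k$ while realizing the parity defect, and it simultaneously bounds the number and sizes of the $H_i$ so the case analysis stays finite.

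When no such bad matching can be assembled, I would argue that the same analysis forces rigid structure: $S$ must be independent, each component must attach to $S$ in a single-sided way, and the entire vertex set splits into two classes across which all edges run. That is precisely a bipartition of $G$, so $G$ is bipartite, which completes the contrapositive and hence the theorem.

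I expect the main obstacle to be the construction in the second paragraph — converting an arbitrary odd cycle into a size-$k$ matching with a guaranteed parity defect. The difficulty is twofold. First, one must split into cases according to how $S$ is matched internally: an independent $S$ cannot be saturated within a budget of $k$ edges once $|S| > k$, so in that regime the parity defect has to be manufactured inside the components rather than by isolating one outright. Second, one must check that the counting licensed by $k \geq \nu/4$ always furnishes the spare vertices the construction consumes. Pinning down this construction, and proving that its failure is equivalent to bipartiteness, is where the real work lies; the bound $2k$ should then emerge as the exact threshold below which the parity obstruction can no longer be evaded.
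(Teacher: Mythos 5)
This statement is quoted from Lou and Yu \cite{LY}; the present paper gives no proof of it, so your argument has to stand on its own. It does not: what you have written is a strategy outline whose decisive step is missing. The entire content of the theorem lies in the construction you defer --- producing, from the assumptions ``not bipartite'' and $\kappa\le 2k-1$, a matching $F$ with $|F|=k$ such that $G-V(F)$ has no perfect matching --- and you say yourself that ``pinning down this construction \dots is where the real work lies.'' Until that construction is exhibited, nothing has been proved. The difficulty is not cosmetic. A matching of $k$ edges covers only $2k$ vertices, so to isolate a component $H_i$ of $G-S$ you must cover all of $S$ with $V(F)$; when $|S|$ is close to $2k-1$ this forces roughly $k-1$ edges of $F$ to lie inside $S$, which is impossible when $S$ is independent or nearly so. That is exactly the case you flag, and you resolve it only with the phrase ``the parity defect has to be manufactured inside the components,'' with no indication of how an odd cycle accomplishes this. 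Likewise the converse direction --- that the impossibility of assembling such an $F$ forces a bipartition of all of $G$ --- is asserted in one sentence (``each component must attach to $S$ in a single-sided way'') without any argument; nothing you have established implies it.

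There is also a quantitative gap: you never say concretely where $k\ge\nu/4$ enters. Your stated role for it (``it forces $|S|\le 2k-1$ to be small compared with $\nu$'') is not right, since $2k-1$ can be nearly $\nu/2$, and the claim that it ``bounds the number and sizes of the $H_i$ so the case analysis stays finite'' is unsubstantiated. In the actual proof of Lou and Yu the hypothesis is exploited through explicit counting that plays the orders of the components of $G-S$ off against $|S|\ge k+1$ and against the sizes of matchings those components can absorb; none of that counting appears in your sketch. The skeleton (contrapositive, minimum cut $S$, every vertex of $S$ having a neighbour in each component, contradiction with $k$-extendability via Tutte's condition) is a reasonable starting point, but as it stands the proof has not been carried out.
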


$M$-alternating paths and $M$-alternating cycles play important
roles in matching theory. Berge's well-known theory \cite{B1} on
maximum matchings and $M$-augmenting paths is a good
demonstration. In \cite{AHLS} and \cite{AHLZ}, $M$-alternating
paths are used to characterize $k$-extendable and
$n$-factor-critical graphs. In this paper, we study the existence
of $M$-alternating Hamilton paths and $M$-alternating Hamilton
cycles in graphs with a perfect matching. The following two lemmas
will be useful to obtain our main results.

\begin{lemma}\label{lm13} Let $G$ be a graph with a
perfect matching $M$. Let $C=u_{0}u_{1}\ldots u_{2m-1}u_{0}$ be a
longest $M$-alternating cycle in $G$, where $u_{2i-1}u_{2i}\in M$,
$0\leq i\leq m-1$. Let $v$, $w$ be the endvertices of a closed
$M$-alternating path in $G-C$. For any vertex set
$\{u_{2i},u_{2i+1}\}$, $0\leq i\leq m-1$, if $G$ is bipartite then
$e(\{u_{2i},u_{2i+1}\},\{v,w\})\leq 1$, otherwise
$e(\{u_{2i},u_{2i+1}\},\{v,w\})\leq 2$. \end{lemma}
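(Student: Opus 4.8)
The plan is to exploit the maximality of $C$ through an insertion (exchange) argument. First I would record the relevant local structure. Since $u_{2i-1}u_{2i}\in M$ for every $i$, each edge $u_{2i}u_{2i+1}$ lies in $E(G)\setminus M$, and on $C$ the vertex $u_{2i}$ is matched to $u_{2i-1}$ while $u_{2i+1}$ is matched to $u_{2i+2}$. Because the $M$-alternating path $P$ joining $v$ and $w$ in $G-C$ is closed, its first and last edges lie in $M$; hence $v$ and $w$ are matched by $M$ to their neighbors inside $P$, and in particular no edge of $G$ joining $\{u_{2i},u_{2i+1}\}$ to $\{v,w\}$ can belong to $M$.

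The key step is the exchange itself. Suppose both $u_{2i}v$ and $u_{2i+1}w$ are edges of $G$ (the case of $u_{2i}w,\,u_{2i+1}v$ being symmetric). I would delete the non-matching edge $u_{2i}u_{2i+1}$ and reconnect through $P$, forming the cycle that runs $u_{2i+1}C^{+}u_{2i}$ (the long way around $C$), then the edge $u_{2i}v$, then $P$ from $v$ to $w$, and finally the edge $wu_{2i+1}$ back to the start. Checking $M$-alternation at the four junction vertices is routine: at $u_{2i}$ the $M$-edge $u_{2i-1}u_{2i}$ meets the non-$M$-edge $u_{2i}v$; at $v$ the non-$M$-edge $u_{2i}v$ meets the $M$-edge that begins $P$; and symmetrically at $w$ and at $u_{2i+1}$, where the $M$-edge $u_{2i+1}u_{2i+2}$ meets $wu_{2i+1}$. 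Since $P$ is vertex-disjoint from $C$ and contributes at least two new vertices, the resulting $M$-alternating cycle is strictly longer than $C$, contradicting the choice of $C$. Thus neither of the two ``diagonal'' pairs $\{u_{2i}v,u_{2i+1}w\}$ and $\{u_{2i}w,u_{2i+1}v\}$ can be entirely present.

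For the general bound I would note that the four potential edges between $\{u_{2i},u_{2i+1}\}$ and $\{v,w\}$ form a copy of $K_{2,2}$, whose only two perfect matchings are exactly the two forbidden diagonal pairs. Any three of the four edges contain one of these matchings, so at most two edges are present, giving $e(\{u_{2i},u_{2i+1}\},\{v,w\})\leq 2$. For the bipartite bound I would bring in parity: writing the color classes as $A$ and $B$, the alternation along $C$ forces $u_{2i}$ and $u_{2i+1}$ into opposite classes, while tracing $P$ (which has an odd number of edges, hence an even number of vertices) shows that $v$ and $w$ also lie in opposite classes. Consequently the two potential edges that would join vertices of the same class do not exist, and the two surviving edges are precisely one of the diagonal pairs already excluded; therefore at most one survives, i.e.\ $e(\{u_{2i},u_{2i+1}\},\{v,w\})\leq 1$.

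The main obstacle is essentially the bookkeeping in the exchange step: one must confirm that the rerouted closed walk is a genuine cycle (disjointness of $C$ and $P$ is given) and that alternation is preserved at every junction in both diagonal cases, including the boundary index $i=m-1$ where the subscripts wrap modulo $2m$. Once that verification is secured, the $K_{2,2}$ counting and the parity argument follow immediately.
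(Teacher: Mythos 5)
Your proof is correct and follows essentially the same route as the paper's: the same exchange through $P$ producing the longer $M$-alternating cycle $u_{2i}vPwu_{2i+1}C^{+}u_{2i}$, ruling out each diagonal pair, and the same parity observation in the bipartite case. Your extra verification of alternation at the junctions and the $K_{2,2}$ counting only make explicit what the paper leaves implicit.
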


\begin{proof}
Let $P$ be a closed $M$-alternating path connecting $v$ and $w$ in
$G-C$. If $u_{2i}v$, $u_{2i+1}w\in E(G)$, then
$u_{2i}vPwu_{2i+1}C^{+}u_{2i}$ is an $M$-alternating cycle longer
than $C$, contradicting the maximality of $C$. Thus $|\{u_{2i}v,
u_{2i+1}w\}\cap E(G)|\leq 1$. Similarly $|\{u_{2i}w,
u_{2i+1}v\}\cap E(G)|\leq 1$. So
$e(\{u_{2i},u_{2i+1}\},\{v,w\})\leq 2$. If $G$ is bipartite, then
$|\{u_{2i}v, u_{2i+1}w\}\cap E(G)|=0$ or $|\{u_{2i}w,
u_{2i+1}v\}\cap E(G)|=0$, so $e(\{u_{2i},u_{2i+1}\},\{v,w\})\leq
1$. \end{proof}

\begin{lemma}\label{lm14} Let $G$ be a graph with a
perfect matching $M$. Let $P=u_{0}u_{1}\ldots u_{2p-1}$ be a
longest closed $M$-alternating path in $G$. Let $v$, $w$ be the
endvertices of a closed $M$-alternating path in $G-P$. For any
vertex set $\{u_{2i-1},u_{2i}\}$, $1\leq i\leq p-1$, if $G$ is
bipartite then $e(\{u_{2i-1},u_{2i}\},\{v,w\})\leq 1$, otherwise
$e(\{u_{2i-1},u_{2i}\},\{v,w\})\leq 2$. \end{lemma}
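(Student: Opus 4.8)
The plan is to mirror the proof of Lemma~\ref{lm13}, exploiting the fact that a longest closed $M$-alternating path behaves very much like a longest $M$-alternating cycle once we observe how the endpoints of $P$ are matched. Note that since $P=u_{0}u_{1}\ldots u_{2p-1}$ is a closed $M$-alternating path, it starts and ends with edges of $M$, so $u_{0}u_{1}\in M$ and $u_{2p-2}u_{2p-1}\in M$; consequently the $M$-edges along $P$ are exactly the pairs $u_{2j}u_{2j+1}$, and the internal vertices pair up as $\{u_{2i-1},u_{2i}\}$ joined by an edge in $E(G)\setminus M$ for $1\leq i\leq p-1$. Let $Q$ be the closed $M$-alternating path in $G-P$ joining $v$ and $w$.

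First I would show $e(\{u_{2i-1},u_{2i}\},\{v,w\})\leq 2$ in the general case by ruling out the two configurations that would force both of the ``crossing'' edges to be present. If both $u_{2i-1}v$ and $u_{2i}w$ lie in $E(G)$, then I would splice $Q$ into $P$ by replacing the non-matching edge $u_{2i-1}u_{2i}$: the walk $u_{0}Pu_{2i-1}\,v\,Q\,w\,u_{2i}Pu_{2p-1}$ is a closed $M$-alternating path, since each of $u_{2i-1}v$ and $w u_{2i}$ is a non-matching edge replacing the non-matching edge $u_{2i-1}u_{2i}$, and $Q$ contributes its matching edges at $v$ and $w$. Counting vertices, this new path is strictly longer than $P$, contradicting maximality. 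The symmetric argument with $u_{2i-1}w$ and $u_{2i}v$ gives the other bound $|\{u_{2i-1}w,u_{2i}v\}\cap E(G)|\leq 1$. Adding the two gives the general bound of $2$.

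For the bipartite refinement I would argue as in Lemma~\ref{lm13} using a proper $2$-colouring of $G$. Along the $M$-alternating path $P$, consecutive vertices receive opposite colours, so $u_{2i-1}$ and $u_{2i}$ lie in different parts; likewise $v$ and $w$, being the ends of a closed (hence even-length) $M$-alternating path $Q$, lie in different parts. Then exactly one of the two ``crossing'' edge-pairs can be consistent with the bipartition: an edge from $\{u_{2i-1},u_{2i}\}$ to $\{v,w\}$ can only join vertices of opposite colours, and the colour pattern forces $\{u_{2i-1}v,u_{2i}w\}$ and $\{u_{2i-1}w,u_{2i}v\}$ to sit in opposite parity classes, so one of these pairs supplies no edges at all. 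Hence $e(\{u_{2i-1},u_{2i}\},\{v,w\})\leq 1$.

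The only point that genuinely requires care — and the step I expect to be the main obstacle — is verifying that the spliced object is again a \emph{closed} $M$-alternating path rather than merely an alternating walk, and in particular that it is longer than $P$. Here I must confirm that the vertices of $Q$ are disjoint from those of $P$ (guaranteed since $v,w$ and their connecting path lie in $G-P$) so that no vertex is repeated, and that the parity of edges at the splice junctions is correct so that the resulting path still begins and ends with $u_{0}u_{1}$ and $u_{2p-2}u_{2p-1}$ in $M$. Once the alternation and disjointness are checked, the length comparison is immediate because $Q$ contributes at least the two endvertices $v,w$ that were not on $P$, making the contradiction with maximality clean.
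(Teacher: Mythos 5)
Your proof is correct and is precisely the argument the paper intends: the paper's own ``proof'' of this lemma is just the remark that it is similar to that of Lemma~\ref{lm13}, and your splicing of $Q$ in place of the non-matching edge $u_{2i-1}u_{2i}$ to build a longer closed $M$-alternating path is the exact path analogue of that cycle argument. One cosmetic slip: a closed $M$-alternating path has an \emph{odd} number of edges (it begins and ends with $M$-edges), not even length, but that odd parity is exactly what you need for $v$ and $w$ to lie in different parts of the bipartition, so your conclusion stands.
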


\begin{proof}The proof is similar to that of Lemma \ref{lm13}. \end{proof}

\section{$M$-alternating cycles in bipartite graphs}

\begin{theorem}\label{th21}
Let $G$ be a bipartite graph and $M$ a perfect matching of $G$.
For any two vertices $x$ and $y$ in different parts of $G$,
$d(x)+d(y)\geq \nu/2+2$. Then $G$ has an $M$-alternating Hamilton
cycle.
\end{theorem}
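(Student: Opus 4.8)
The plan is to argue by contradiction from a longest $M$-alternating cycle, in the spirit of the Moon--Moser theorem but tracking the alternation. Since $G$ is bipartite with a perfect matching, both parts $A$ and $B$ have size $\nu/2$, and the hypothesis reads $d(x)+d(y)\ge \nu/2+2$ whenever $x,y$ lie in different parts. The generous degree sum easily produces at least one $M$-alternating cycle, so let $C=u_0u_1\ldots u_{2m-1}u_0$ be a \emph{longest} one, with $u_{2i-1}u_{2i}\in M$. Suppose $C$ is not a Hamilton cycle and set $H=G-C$. Because $C$ is $M$-alternating, $M$ restricts to a perfect matching of $H$, so $H$ is a balanced bipartite graph on $\nu-2m>0$ vertices. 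I would then choose a \emph{longest closed $M$-alternating path} $P=v_0v_1\ldots v_{2p-1}$ inside $H$ (a single matching edge of $H$ is already such a path, so $P$ exists), with $v_0v_1\in M$ and endvertices $v_0\in A$, $v_{2p-1}\in B$ in different parts.

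The counting rests on two degree estimates for the pair $\{v_0,v_{2p-1}\}$. First, since $v_0,v_{2p-1}$ are the endvertices of a closed $M$-alternating path in $G-C$, Lemma~\ref{lm13} (bipartite case) gives $e(\{u_{2i},u_{2i+1}\},\{v_0,v_{2p-1}\})\le 1$ for every $i$; as the sets $\{u_{2i},u_{2i+1}\}$ partition $V(C)$, summing yields $|N_C(v_0)|+|N_C(v_{2p-1})|\le m$. Second, I claim $v_0$ and $v_{2p-1}$ send no edge of $H$ outside $P$: if $v_0$ had a neighbour $x\in V(H)\setminus V(P)$, then $x\in B$ and its $M$-partner $x'$ lies in $A$, and either $x'\notin V(P)$, in which case $x'xv_0v_1\ldots v_{2p-1}$ is a strictly longer closed $M$-alternating path, or $x'\in V(P)$, forcing $x'=v_{2k}$ for some $k$ and hence $x=v_{2k+1}\in V(P)$, a contradiction. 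Thus $N_H(v_0)\subseteq\{v_1,v_3,\ldots,v_{2p-1}\}$ and $N_H(v_{2p-1})\subseteq\{v_0,v_2,\ldots,v_{2p-2}\}$, each a set of size $p$.

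The main work is a crossing (rotation) argument establishing $|N_H(v_0)|+|N_H(v_{2p-1})|\le p+1$. Pairing the potential neighbours as $(v_{2k+1},v_{2k})$ for $0\le k\le p-1$, I would show that for interior indices the two chords $v_0v_{2k+1}$ and $v_{2p-1}v_{2k}$ cannot both be present: such a pair of chords reroutes $P$, by a rotation respecting the alternation, into a closed $M$-alternating configuration whose exposed endvertex acquires a neighbour off $P$, or which merges with $C$, contradicting the maximality of $P$ or of $C$. Combining the two estimates with $2p\le|H|=\nu-2m$, so $p\le\nu/2-m$, gives $d(v_0)+d(v_{2p-1})\le m+(p+1)\le \nu/2+1$, which contradicts $d(v_0)+d(v_{2p-1})\ge\nu/2+2$; hence $C$ must be Hamiltonian. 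The delicate point, and the step I expect to be the main obstacle, is precisely this rotation argument: because a closed $M$-alternating path must begin and end with matching edges, the usual rotation is valid only for chords of a definite parity, and the boundary pairs $k=0$ and $k=p-1$ (carrying the forced matching edges $v_0v_1$ and $v_{2p-2}v_{2p-1}$) must be treated separately so that the bound is $p+1$ rather than $p+2$. Since the hypothesis leaves only a slack of $+2$ in the degree sum while the argument lands at $\nu/2+1$, this boundary bookkeeping has to be carried out with care.
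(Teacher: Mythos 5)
Your overall frame (a longest $M$-alternating cycle $C$, a longest closed $M$-alternating path $P=v_0\ldots v_{2p-1}$ in $H=G-C$, then a degree count on the two endvertices) matches the paper's, and two of your three estimates are correct: $|N_C(v_0)|+|N_C(v_{2p-1})|\le m$ follows from Lemma~\ref{lm13}, and indeed $N_H(v_0)\subseteq\{v_1,v_3,\ldots,v_{2p-1}\}$ and $N_H(v_{2p-1})\subseteq\{v_0,v_2,\ldots,v_{2p-2}\}$. But the step you yourself flag as the main obstacle, the bound $|N_H(v_0)|+|N_H(v_{2p-1})|\le p+1$, is a genuine gap: it is not merely delicate, it is false under the hypotheses you have available. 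In the bipartite setting the chords $v_0v_{2k+1}$ and $v_{2p-1}v_{2k}$ only close the two $M$-alternating cycles $v_0Pv_{2k+1}v_0$ and $v_{2k}Pv_{2p-1}v_{2k}$; these share the matching edge $v_{2k}v_{2k+1}$, and the third cycle of the resulting theta graph has two consecutive non-matching edges at each branch vertex, so it is not $M$-alternating and yields neither a longer closed $M$-alternating path nor a cycle that can be merged with $C$. No rotation is available either: every neighbour of $v_0$ on $P$ is an odd-indexed $v_{2k+1}$ whose matching partner $v_{2k}$ lies on the $v_0$-side, so any rerouted walk through the chord $v_0v_{2k+1}$ immediately produces two consecutive non-matching edges. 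Concretely, if $H\cong K_{p,p}$ with the induced matching, $P$ is a longest closed $M$-alternating path in $H$, nothing contradicts the maximality of $C$ (provided $|C|\ge 2p$), and yet $|N_H(v_0)|+|N_H(v_{2p-1})|=2p$. With only the provable bounds your count gives $d(v_0)+d(v_{2p-1})\le m+2p$, which does not fall below $\nu/2+2$ without further information.

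The paper extracts the missing savings from the cycle side rather than the path side, and it needs two additional ingredients you omit. First, a maximal-counterexample (closure) argument shows $|C|\ge\nu/2+2$. Second, \emph{when both endvertices of $P$ send edges to $C$}, it chooses neighbours $u_{2r+1}$ of $v_0$ and $u_{2s}$ of $v_{2p-1}$ minimizing the arc $u_{2s}C^{+}u_{2r+1}$; since $C$ is longest that arc has length at least $|P|+2$, and applying Lemma~\ref{lm13} only to the pairs on the complementary arc sharpens your bound $m$ to $(|C|-|P|)/2+1=m-p+1$. Combined with the easy bounds $d_H(v_0)\le|H|/2$ and $d_H(v_{2p-1})\le p$ this reaches $\nu/2+1$ and the contradiction. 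Crucially, the case in which an endvertex sends \emph{no} edge to $C$ cannot be absorbed into this count: the paper handles it by a separate structural argument (every vertex of $A\cap V(H)$ must send edges to $C$, hence no vertex of $B\cap V(H)$ does, and then a pair $u_{2i}$, $y$ with $y\in B\cap V(H)$ violates the degree hypothesis). Your proposal has no route to a contradiction in that case, so both the preliminary lower bound on $|C|$ and this case split would have to be added, at which point you are essentially reconstructing the paper's proof.
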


\begin{proof}
Let $G^\prime$ be a graph, with a perfect matching $M$, which
satisfies the conditions of the theorem but does not have an
$M$-alternating Hamilton cycle. We add edges to $G^\prime$ until
the addition of any more edge results in an $M$-alternating
Hamilton cycle. Let the graph obtained finally be $G$.

Let the bipartition of $G$ be $(A,B)$. $G$ cannot be complete
bipartite, or an $M$-alternating Hamilton cycle exists. So there
are two nonadjacent vertices $w_0\in A$ and $w_{\nu-1}\in B$. By
our assumption on $G$, $G+w_0w_{\nu-1}$ has an $M$-alternating
Hamilton cycle. Hence, there is a closed $M$-alternating Hamilton
path in $G$ connecting $w_0$ and $w_{\nu-1}$. Let the path be
$P^\prime=w_0w_1\ldots w_{\nu-1}$, where $w_{2i}\in A$ and
$w_{2i-1}\in B$, $0\leq i\leq \nu/2$. Since
$d(w_0)+d(w_{\nu-1})\geq \nu/2+2$, without loss of generality, let
$d(w_0)\geq d(w_{\nu-1})$, we have $d(u_0)\geq \nu/4+1$. Hence the
neighbor $w_i$ of $w_0$ with the maximum subscript $i$ satisfies
$i\geq 2(\nu/4+1)=\nu/2+2$. Then $w_0P^\prime w_iw_0$ is an
$M$-alternating cycle with length at least $\nu/2+2$.

Let $C=u_{0}u_{1}\ldots u_{2m-1}u_{0}$ be one longest
$M$-alternating cycle in $G$, where $u_{2i}\in A$, $u_{2i+1}\in B$
and $u_{2i-1}u_{2i}\in M$, $0\leq i \leq m-1$. Then $2m< \nu$. By
above discussion, $2m\geq \nu/2+2$. Let $G_1=G-C$, we have
$|G_1|\leq \nu/2-2$. Denote the degree of a vertex $x$ in $G_1$ by
$d_1(x)$.

Let $v_0$ be a vertex in $G_1$ who sends some edges to $C$.
Without loss of generality let $v_0\in A$. Let $P=v_0v_1\ldots
v_{2p-1}$ be a maximal closed $M$-alternating path in $G_1$
starting with $v_0$. Then $v_{2p-1}$ cannot be adjacent to any
vertex in $G_1-P$. So $d_1(v_{2p-1})\leq p$.

Assume that $v_{2p-1}$ also sends some edges to $C$. Since $G$ is
bipartite, $v_{0}$ and $v_{2p-1}$ can only be adjacent to
$u_{2i+1}$ and $u_{2j}$, $0\leq i,j\leq m-1$, respectively. Let
$u_{2r+1}$ and $u_{2s}$ be the neighbors of $v_{0}$ and $v_{2p-1}$
on $C$ such that the path $P_{1}=u_{2s}C^{+}u_{2r+1}$ is the
shortest. Then any internal vertex of $P_{1}$ cannot be adjacent
to $v_{0}$ or $v_{2p-1}$. Consider the $M$-alternating cycle
$C_{1}=u_{2r+1}C^{+}u_{2s}v_{2p-1}Pv_{0}u_{2r+1}$. Since $C$ is
the longest $M$-alternating cycle in $G$, $|C_{1}|\leq |C|$, so
$|P|\leq |P_{1}|-2$.

By Lemma \ref{lm13}, for any vertex set $\{u_{2i}, u_{2i+1}\}$ on
$P_{2}$, $e(\{u_{2i},u_{2i+1}\},\{v_{0},v_{2p-1}\})\leq 1$. The
number of such sets is $$(|P_{2}|-2)/2= (|C|-|P_{1}|+2-2)/2 \leq
(|C|-(|P|+2))/2=(|C|-|P|)/2-1.$$ So
\begin{eqnarray} d(v_{0})+d(v_{2p-1})
&=&|N_{C}(v_{0})|+|N_{C}(v_{2p-1})|+d_1(v_{0})+d_1(v_{2p-1}) \nonumber \\
& \leq& ((|C|-|P|)/2-1+2)+|G_1|/2+p \nonumber \\
&=& (2m-2p)/2+1+(\nu-2m)/2+p \nonumber \\
&=&\nu/2+1 \nonumber,
\end{eqnarray}
contradicting $d(v_{0})+d(v_{2p-1})\geq \nu/2+2$. Therefore,
$v_{2p-1}$ sends no edges to $C$. Similarly, for any vertex $x\in
G_1$ who sends some edges to $C$, and any maximal close
$M$-alternating path $P_0$ in $G_1$ starting with $x$, the other
endvertex $y$ of $P_0$ sends on edge to $C$.

We also have $d(v_{2p-1})\leq p\leq |G_1|/2\leq \nu/4-1$. For any
vertex $x\in A\cap V(G_1)$, $d(x)\geq \nu/2+2-d(v_{2p-1})\geq
\nu/2+2-(\nu/4-1)=\nu/4+3$. Since $d_1(x)\leq |G_1|/2\leq
\nu/4-1$, $x$ must send some edges to $C$.

Suppose that $y\in B\cap V(G_1)$ sends some edges to $C$. Let
$P(y)$ be a maximal closed $M$-alternating path in $G_1$ starting
with $y$. Then, the other endvertex $x$ of $P(y)$ sends on edge to
$C$. However $x\in A\cap V(G_1)$, a contradiction. So for any
$y\in B\cap V(G_1)$, $y$ sends no edge to $C$. Hence $d(y)\leq
|G_1|/2$. Correspondingly, for any $u_{2i}$, $0\leq i\leq m-1$,
$u_{2i}$ sends no edge to $G_1$, so $d(u_{2i})\leq |C|/2$. But
then $d(u_{2i})+d(y)\leq |C|/2+|G_1|/2=\nu/2$, contradicting the
conditions of our theorem. So $G$, and therefore $G^\prime$, must
have an $M$-alternating Hamilton cycle.
\end{proof}
%
\begin{remark}
The lower bound of degree sum in Theorem \ref{th21} is best
possible. Let $H_0$ and $H_1$ be two disjoint complete bipartite
with bipartition $(U_0,\ V_0)$ and $(U_1,\ V_1)$ respectively,
where $|U_0|=|U_1|=|V_0|=|V_1|$. Let $u,\ v\notin V(H_0)\cup
V(H_1)$ be two different vertices. We construct graph $G$ by
joining $u$ to every vertex in $V_i$, $v$ to every vertex in
$U_i$, $i=0,\ 1$, and $u$ to $v$. For any $x$ and $y$ in different
parts of $G$, we have $d(x)+d(y) \geq \nu/2+1$. Let $M$ be a
perfect matching containing the edge $uv$, $G$ does not have an
$M$-alternating Hamilton cycle.
\end{remark}
\section{$M$-alternating paths in general graphs}
In this section we bring forward a result on the relationship
between degree sums and $M$-alternating Hamilton paths, which will
be used in the next section as well.
\begin{theorem}\label{th31} Let $G$ be a graph with a
perfect matching $M$. For any $x$, $y\in$ $V(G)$ connected by a
closed $M$-alternating path, $d(x)+d(y)\geq\nu-1$. Then $G$ has a
closed $M$-alternating Hamilton path.\end{theorem}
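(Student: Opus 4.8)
The plan is to argue by contradiction using an extremal closed $M$-alternating path together with Lemma \ref{lm14}, mirroring the degree-counting scheme of Theorem \ref{th21}. Suppose $G$ has no closed $M$-alternating Hamilton path, and let $P=u_0u_1\ldots u_{2p-1}$ be a \emph{longest} closed $M$-alternating path in $G$, with $u_{2i}u_{2i+1}\in M$ for $0\le i\le p-1$. Since $P$ is not Hamilton, $G_1:=G-P$ is nonempty; because $P$ consists of whole matching edges, $M$ restricts to a perfect matching of $G_1$, so $G_1$ contains a closed $M$-alternating path. I would choose such a path $Q=z_0z_1\ldots z_{2q-1}$ in $G_1$ that is maximal, with endvertices $v=z_0$ and $w=z_{2q-1}$. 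As $Q$ is a closed $M$-alternating path joining $v$ and $w$, the hypothesis gives $d(v)+d(w)\ge \nu-1$, and this is the inequality I will contradict.

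First I would show that neither $v$ nor $w$ sends an edge to an endvertex of $P$. Indeed, $u_0$ is matched along $P$ (to $u_1$), so any edge $vu_0$ would be non-matching, and then $wQvu_0u_1\ldots u_{2p-1}$ would be a closed $M$-alternating path longer than $P$, a contradiction; the three symmetric cases ($vu_{2p-1}$, $wu_0$, $wu_{2p-1}$) are handled by the same splicing. Hence $e(\{v,w\},\{u_0,u_{2p-1}\})=0$, and all edges from $\{v,w\}$ to $P$ land on the interior vertices $u_1,\ldots,u_{2p-2}$, which split exactly into the $p-1$ pairs $\{u_{2i-1},u_{2i}\}$, $1\le i\le p-1$. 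Applying Lemma \ref{lm14} to each pair (with $P$ longest and $v,w$ the ends of the closed $M$-alternating path $Q$ in $G-P$) yields $e(\{v,w\},\{u_{2i-1},u_{2i}\})\le 2$, so summing gives $|N_P(v)|+|N_P(w)|\le 2(p-1)$.

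Next I would bound the degrees of $v$ and $w$ inside $G_1$. As in the proof of Theorem \ref{th21}, maximality of $Q$ forces every neighbour of $v$ (and of $w$) in $G_1$ to lie on $Q$, and by the same reasoning used there for the analogous bound $d_1(v_{2p-1})\le p$ one gets $d_{G_1}(v)+d_{G_1}(w)\le 2q$. Combining the two estimates with $2q\le |G_1|=\nu-2p$, I obtain
\[
d(v)+d(w)=|N_P(v)|+|N_P(w)|+d_{G_1}(v)+d_{G_1}(w)\le 2(p-1)+2q\le 2(p-1)+(\nu-2p)=\nu-2,
\]
which contradicts $d(v)+d(w)\ge\nu-1$. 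Therefore $P$ must be a closed $M$-alternating Hamilton path.

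The main obstacle is the clean bookkeeping of the two endvertices $u_0,u_{2p-1}$ of $P$: they are not covered by the interior pairs to which Lemma \ref{lm14} applies, so the splicing argument ruling out edges from $\{v,w\}$ to them is essential, and one must check in each of the four cases that prepending or appending $Q$ keeps the path $M$-alternating and closed. The secondary point requiring care is the in-$G_1$ degree bound $d_{G_1}(v)+d_{G_1}(w)\le 2q$, which rests on the maximality of $Q$ rather than on the degree hypothesis and is obtained exactly as for the corresponding bound in Theorem \ref{th21}.
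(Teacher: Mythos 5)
The first half of your argument is sound: with $P$ a longest closed $M$-alternating path and $Q$ a maximal closed $M$-alternating path in $G_1=G-P$ with ends $v,w$, the splicing argument does rule out edges from $\{v,w\}$ to $u_0$ and $u_{2p-1}$, and Lemma \ref{lm14} then gives $|N_P(v)|+|N_P(w)|\le 2(p-1)$. The genuine gap is the bound $d_{G_1}(v)+d_{G_1}(w)\le 2q$. Maximality of $Q$ only forces $N_{G_1}(v),N_{G_1}(w)\subseteq V(Q)$, which yields $d_{G_1}(v)+d_{G_1}(w)\le 2(2q-1)$, not $2q$. The bound $d_1(v_{2p-1})\le p$ in Theorem \ref{th21} that you cite as a model is not a consequence of maximality alone: it also uses bipartiteness, since there the endvertex can only be adjacent to the $p$ vertices of the path lying in the opposite colour class. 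In a general graph nothing of the sort holds: if $G_1$ is a complete graph on $2q$ vertices and $Q$ is a spanning closed $M$-alternating path of $G_1$, then $d_{G_1}(v)+d_{G_1}(w)=2(2q-1)>2q$ for $q\ge 2$. With only the true bound $d_{G_1}(v)+d_{G_1}(w)\le 2(|G_1|-1)$, your final estimate becomes $d(v)+d(w)\le 2(p-1)+2(\nu-2p-1)=2\nu-2p-4$, which contradicts $d(v)+d(w)\ge\nu-1$ only when $2p\ge\nu-2$, i.e.\ when $P$ already misses at most one matching edge. So the counting scheme of Theorem \ref{th21} does not transfer to the general case.

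This is exactly why the paper's proof is so much longer. It accepts the trivial bound $|N(x)\cap V(S)|\le |V(S)|-1$ inside $G-P$ and instead works much harder on the $P$-side: it first shows $m\ge(\nu+2)/3$ and produces at least $\nu-2m+1$ ``removable'' edges $u_{2i-1}u_{2i}$ with $u_0u_{2i-1},u_{2m-1}u_{2i}\in E(G)$; each such edge yields two $M$-alternating cycles covering $V(P)$, and (using its Claim 2) the neighbours on $P$ of the two ends $x,y$ of a matching edge off $P$ are forced into a single short segment between consecutive removable edges. That confinement, not Lemma \ref{lm14} alone, is what drives $|N_P(x)|+|N_P(y)|$ down to $6m-2\nu$ and produces the contradiction. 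To complete your proof you would need an argument of comparable strength either on the $P$-side or inside $G-P$; as written, the key inequality is unjustified and in general false.
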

\begin{proof} Suppose that $G$ does not have a closed $M$-alternating
Hamilton path. Let $P=u_{0}u_{1}\ldots$$u_{2m-1}$ be a longest
closed $M$-alternating path in $G$. Then $|P|\leq\nu-2$.

By the choice of $P$, $N(u_{0})$, $N(u_{2m-1})\subseteq V(P)$. So
$$|P|\geq max(d(u_{0}),d(u_{2m-1}))+1\geq
(d(u_{0})+d(u_{2m-1}))/2+1\geq(\nu-1)/2+1=(\nu+1)/2.$$ Let
$N_{0}(u_{0})$ and $N_{1}(u_{0})$ be the set of the neighbors of
$u_{0}$ whose indices are even and odd, $N_{0}(u_{2m-1})$ and
$N_{1}(u_{2m-1})$ be the set of the neighbors of $u_{2m-1}$ whose
indices are even and odd, respectively. Let $S=M\backslash$$E(P)$.
Denoted by $V(S)$ the set of vertices associated with the edges in
$S$. Then
\begin{equation}|N_{0}(u_{0})|+|N_{1}(u_{0})|+|N_{0}(u_{2m-1})|+|N_{1}(u_{2m-1})|
=d(u_{0})+d(u_{2m-1})\geq\nu-1.\end{equation}
\textbf{Claim 1.} There does not exist an $M$-alternating cycle
$C$ in $G$ such that $V(P)\subseteq V(C)$.

Suppose that such a cycle $C$ exists. Then for an edge $xy\in
M\backslash E(C)$, each of $x$ and $y$ cannot be adjacent to any
vertex on $C$, or we can obtain a closed $M$-alternating path
longer than $P$, by going through $xy$, then all vertices on $C$.
So
$$d(x)+d(y)\leq 2(\nu-1)-2|C|\leq 2(\nu-1)-2|P|\leq 2(\nu-1)-(\nu+1)=\nu-3,$$
contradicting the condition of the theorem. Thus Claim 1 holds.
$\Box$

For any edge $u_{2i-1}u_{2i}$, 1$\leq$$i\leq$$m-1$, if
$u_{0}u_{2i}$, $u_{2i-1}u_{2m-1}\in$$E(G)$, then we obtain an
$M$-alternating cycle $u_{0}u_{2i}Pu_{2m-1}u_{2i-1}Pu_{0}$
containing all vertices on $P$, contradicting Claim 1. So
\begin{equation}|N_{0}(u_{0})|+|N_{1}(u_{2m-1})|\leq m-1.\end{equation}
By Claim 1, $u_{0}$ and $u_{2m-1}$ cannot be adjacent to each
other, so $|N_{1}(u_{0})|\leq m-1$ and $|N_{0}(u_{2m-1})|\leq
m-1$. Together with (1), we have
\begin{equation} |N_{0}(u_{0})|+|N_{1}(u_{2m-1})|\geq
(\nu-1)-(|N_{1}(u_{0})|+|N_{0}(u_{2m-1})|)\geq \nu-2m+1
.\end{equation}

By (2) and (3), $m-1\geq \nu-2m+1$, that is,
\begin{equation}m\geq (\nu+2)/3.\end{equation}

By (1) and (2),
\begin{equation}|N_{1}(u_{0})|+|N_{0}(u_{2m-1})|\geq \nu-m.\end{equation}

We classify all sets $\{u_{2i-1},u_{2i}\}$, $1\leq i\leq m-1$ as
following. If $|\{u_{0}u_{2i-1},u_{2m-1}u_{2i}\} \cap E(G)|=0$,
$1$ or $2$, then let $\{u_{2i-1},u_{2i}\} \in \mathscr{C}_{0}$,
$\mathscr{C}_{1}$ or $\mathscr{C}_{2}$. Let
$|\mathscr{C}_{1}|=r_{1}$ and $|\mathscr{C}_{2}|=r_{2}$. Then
\begin{equation} r_{1}+r_{2} \leq m-1, \end{equation} and
\begin{equation} r_{1}+2r_{2}=|N_{1}(u_{0})|+|N_{0}(u_{2m-1})| \geq\nu-m. \end{equation}
$\indent$By (6) and (7), we have $r_{2}\geq \nu-2m+1$.
$\newline$ \textbf{Claim 2}. For any $xy\in S$, $N_{P}(x)\neq
\phi$ and $N_{P}(y)\neq \phi$.

Suppose that the claim is not true and without loss of generality
let $N_{P}(y)= \phi$. For any edge $u_{2i-1}u_{2i}$, $1\leq i\leq
m-1$, if $u_{0}u_{2i}\in E(G)$, then $x$ cannot be adjacent to
$u_{2i-1}$, or $yxu_{2i-1}Pu_{0}u_{2i}Pu_{2m-1}$ is a closed
$M$-alternating path longer than $P$, contradicting the maximality
of $P$. Similarly, if $u_{2m-1}u_{2i-1}\in E(G)$, then $x$ cannot
be adjacent to $u_{2i}$. Furthermore $x$ cannot be adjacent to
$u_{0}$ and $u_{2m-1}$. Thus $|N_{P}(x)|\leq
2m-(|N_{0}(u_{0})|+|N_{1}(u_{2m-1})|)-2 \leq
2m-(\nu-2m+1)-2=4m-\nu-3$.
$\newline\indent$Since $|N(x)\cap V(S)|\leq |V(S)|-1 = \nu-2m-1$
and similarly $|N(y)\cap V(S)|\leq \nu-2m-1$. We have
$d(x)+d(y)\leq 4m-\nu-3+2(\nu - 2m-1)\leq \nu-5$, contradicting
the condition of the theorem. So Claim 2 must hold. $\Box$
$\newline\indent$We call an edge $u_{2i-1}u_{2i}$, $1\leq i\leq
m-1$, removable if $\{u_{2i-1},u_{2i}\}\in \mathscr{C}_{2}$. For
every removable edge $u_{2i-1}u_{2i}$ we get two $M$-alternating
cycles containing all vertices of $P$, that is,
$C_{0}=u_{0}Pu_{2i-1}u_{0}$ and $C_{1}=u_{2i}Pu_{2m-1}u_{2i}$. For
any edge $xy\in S$, if $N_{C_{0}}(x)\neq \phi \neq N_{C_{1}}(y)$,
or $N_{C_{1}}(x)\neq \phi \neq N_{C_{0}}(y)$, then we obtain a
closed $M$-alternating path longer than $P$, by traversing all
vertices on $C_{0}$, followed by $x$ and $y$ and those on $C_{1}$,
contradicting the maximality of $P$. But by Claim 2, $N_{P}(x)\neq
\phi \neq N_{P}(y)$. So either $N_{P}(x), N_{P}(y)\subseteq
V(C_{0})$ or $N_{P}(x), N_{P}(y)\subseteq V(C_{1})$.
$\newline\indent$Let $r=r_{2}$, $\{e_{1}, e_{2}, \ldots ,e_{r}\}$
the set of removable edges, $P_{0}, P_{1}, \ldots, P_{r}$ the
$r+1$ segments of $P$ obtained by removing all removable edges.
Then $P=P_{0}e_{1}P_{1}e_{2}\ldots e_{r}P_{r}$ and
$V(P)=\cup_{i=0}^{r}V(P_{i})$. Note here that the length of
$P_{i}$ ($0\leq i\leq r$) is at least 1.
$\newline\indent$For any edge $xy\in S$, suppose that there exist
integers $s$, $t$, $0\leq s\neq t\leq r$, such that
$N_{P_{s}}(x)\neq \phi \neq N_{P_{t}}(y)$. Without loss of
generality, suppose that $s<t$. Let $e_{t}=u_{2h-1}u_{2h}$. Then
$x$ and $y$ are adjacent to vertices on two $M$-alternating cycles
$u_{0}Pu_{2h-1}u_{0}$ and $u_{2h}Pu_{2m-1}u_{2h}$ respectively,
contradicting our conclusion above. So there must exist an integer
$l$, $1\leq l \leq r$, such that all neighbors of $x$, $y$ on $P$
be on $P_{l}$.
$\newline\indent$Let $P_{l}=u_{2g}u_{2g+1}\ldots u_{2g+2p-1}$.
Counting the vertices on $P_{l}$, we have
$$2p=|P_{l}|=|E(P_{l})|+1\leq(|E(P)|-2r)+1=2m-2r\leq
2m-2(\nu-2m+1)=6m-2\nu-2.$$ Note that by (4) the last value is
positive. By Lemma \ref{lm14},
$e(\{x,y\},\{u_{2g+2j-1},u_{2g+2j}\})\leq 2$ for $1 \leq j\leq
p-1$. So $e(\{x,y\},\{u_{2g+1},u_{2g+2},\ldots,v_{2g+2p-2}\})\leq
2(p-1)$. Then
$$|N_{P}(x)|+|N_{P}(y)|\leq
2(p-1)+4=2p+2\leq 6m-2\nu-2+2=6m-2\nu.$$
$\indent$Since $|N(x)\cap V(S)|$, $|N(y)\cap V(S)|\leq \nu-2m-1$,
we have
\begin{eqnarray}
d(x)+d(y) &=& |N_{P}(x)|+|N_{P}(y)|+|N(x)\cap V(S)|+|N(y)\cap
V(S)| \nonumber \\
&\leq& 6m-2\nu+2(\nu-2m-1) \nonumber \\
&=& 2m-2 \nonumber \\
&<& \nu-2, \nonumber
\end{eqnarray}
again contradicting the condition of our theorem.
\end{proof}
\section{$M$-alternating cycles in general graphs}
In this section, we prove that except for one class of graphs,
every graph $G$ with $\kappa\geq\nu/2$ and a perfect matching $M$
has an $M$-alternating Hamilton cycle. Firstly we construct the
exceptional graphs.
$\newline\indent$We define $\mathcal{G}_1$ as the class of graphs
constructed by taking two copies of the complete graph $K_{2n+1}$,
$n\geq 1$, with vertex sets $\{x_{1},x_{2},\ldots, x_{2n+1}\}$ and
$\{y_{1},y_{2},\ldots, y_{2n+1}\}$, and joining every $x_{i}$ to
$y_{i}$, $1\leq i\leq 2n+1$. It is easy to check that any graph
$G\in\mathcal{G}_1$ with size $4n+2$ ($n\geq 1$) is
$(2n+1)$-connected, but if we take the perfect matching
$M=\{x_{i}y_{i}:1\leq i\leq 2n+1 \}$, then there is no
$M$-alternating Hamilton cycle in $G$. We call $M$ the jointing
matching of $G$. Note that the jointing matching of $G$ is unique.
\begin{lemma} \label{lm41} Let $G$ be a graph with
$\kappa\geq\nu/2$ and $M$ a perfect matching of $G$. Then $G$ has
an $M$-alternating cycle $C$ such that $|C|\geq \nu/2+1$.
\end{lemma}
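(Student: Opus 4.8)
The plan is to first convert the connectivity hypothesis into a degree-sum condition strong enough to invoke Theorem \ref{th31}, producing a spanning closed $M$-alternating path, and then to extract a long $M$-alternating cycle from that path by a rotation-and-counting argument. Since $\delta\geq\kappa\geq\nu/2$, every pair $x,y$ satisfies $d(x)+d(y)\geq 2\delta\geq\nu\geq\nu-1$; in particular this holds for every pair joined by a closed $M$-alternating path, so Theorem \ref{th31} yields a closed $M$-alternating Hamilton path $P=u_0u_1\ldots u_{\nu-1}$, whose matching edges are the pairs $u_{2i}u_{2i+1}$, $0\leq i\leq\nu/2-1$. Because $P$ spans $G$, the two endvertices satisfy $N(u_0),N(u_{\nu-1})\subseteq V(P)$, and $d(u_0),d(u_{\nu-1})\geq\nu/2$.

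Next I would attach cycles to chords at the two ends. If $u_0u_{2j+1}\in E(G)$ for an odd index $2j+1$, then $u_0u_1\ldots u_{2j+1}u_0$ is an $M$-alternating cycle of length $2j+2$, since the chord $u_0u_{2j+1}$ is a non-matching edge meeting the matching edges $u_0u_1$ and $u_{2j}u_{2j+1}$ with the correct parity; symmetrically, each even-indexed neighbour $u_{2j}$ of $u_{\nu-1}$ gives an $M$-alternating cycle $u_{\nu-1}u_{\nu-2}\ldots u_{2j}u_{\nu-1}$ of length $\nu-2j$. Writing $A_{odd}$ for the odd-indexed neighbours of $u_0$ and $B_{even}$ for the even-indexed neighbours of $u_{\nu-1}$, the longest such prefix cycle has length at least $2|A_{odd}|$ and the longest suffix cycle length at least $2|B_{even}|$, because the relevant indices are packed into an initial (respectively terminal) segment of $P$. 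Thus it suffices to prove $|A_{odd}|+|B_{even}|\geq\nu/2+1$, whence the larger of the two families already forces a cycle of length at least $\nu/2+1$.

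To obtain this I would control the complementary sets $A_{even}$ and $B_{odd}$ via a crossing argument: for an odd index $i$, if both $u_0u_{i+1}\in E(G)$ and $u_{\nu-1}u_i\in E(G)$, then rerouting $P$ as $u_0\ldots u_iu_{\nu-1}\ldots u_{i+1}u_0$ gives an $M$-alternating Hamilton cycle, of length $\nu>\nu/2+1$ — an exchange in the spirit of the swap used in Lemma \ref{lm13}. So, unless we are already done, no such $i$ exists, and for each odd $i$ with $1\leq i\leq\nu-3$ at most one of $u_0u_{i+1},u_{\nu-1}u_i$ is an edge; summing over these $\nu/2-1$ indices gives $|A_{even}|+|B_{odd}|\leq\nu/2-1$. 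Combined with $|A_{odd}|+|A_{even}|+|B_{odd}|+|B_{even}|=d(u_0)+d(u_{\nu-1})\geq\nu$, this yields $|A_{odd}|+|B_{even}|\geq\nu/2+1$, completing the argument. The main obstacle is the parity bookkeeping: verifying that each chord yields a genuinely $M$-alternating cycle and that the crossing rotation uses the correct index parities; once those are pinned down, the degree count closes the proof.
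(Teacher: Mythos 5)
Your proof is correct and follows essentially the same route as the paper's: both apply Theorem \ref{th31} via $\delta\geq\kappa\geq\nu/2$ to get a closed $M$-alternating Hamilton path, use the same crossing/rerouting argument to bound the even-indexed neighbours of $u_0$ plus the odd-indexed neighbours of $u_{\nu-1}$ by $\nu/2-1$, and then extract a prefix or suffix alternating cycle of length at least $\nu/2+1$ from the larger of the two complementary neighbour sets. Your sets $A_{odd}$ and $B_{even}$ are exactly the paper's $N_1(u_0)$ and $N_0(u_{2m-1})$, and the parity bookkeeping you flag as the main obstacle checks out.
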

\begin{proof} Suppose that there is no $M$-alternating cycle $C$ with
$|C|\geq \nu/2+1$ in $G$. By $\kappa\geq \nu/2$ we have
$\delta\geq \nu/2$, so $d(x)+d(y)\geq \nu$ for any $x,y\in V(G)$.
By Theorem \ref{th31}, there is an $M$-alternating Hamilton path
in $G$. Let the path be $P=u_{0}u_{1}\ldots u_{2m-1}$, where
$2m=\nu$. We follow the notations $N_{0}(u_{0})$, $N_{1}(u_{0})$,
$N_{0}(u_{2m-1})$, $N_{1}(u_{2m-1})$ in Theorem \ref{th31}.
$\newline\indent$Obviously $u_{0}u_{2m-1}\notin E(G)$, or we have
an $M$-alternating Hamilton cycle, contradicting our assumption.
For any $1\leq i\leq m-1$, if $u_{0}u_{2i}, u_{2m-1}u_{2i-1}\in
E(G)$, then $u_{0}Pu_{2i-1}u_{2m-1}Pu_{2i}u_{0}$ is an
$M$-alternating Hamilton cycle, again contradicting our
assumption. So $u_{0}u_{2i}\notin E(G)$ or $u_{2m-1}u_{2i-1}\notin
E(G)$. Hence $|N_{0}(u_{0})|+|N_{1}(u_{2m-1})|\leq \nu/2-1$.
Therefore,
$$|N_{1}(u_{0})|+|N_{0}(u_{2m-1})|=d(u_{0})+d(u_{2m-1})-(|N_{0}(u_{0})|+|N_{1}(u_{2m-1})|)\geq
\nu/2+1.$$
$\indent$Without loss of generality suppose that
$|N_{1}(u_{0})|\geq |N_{0}(u_{2m-1})|$. Then $|N_{1}(u_{0})|\geq
\nu/4+1/2$. Thus there exists an integer $l$, $1\leq l \leq m$,
such that $2l-1\geq 2(\nu/4+1/2)-1=\nu/2$ and $u_{0}u_{2l-1}\in
E(G)$. Then $u_{0}Pu_{2l-1}u_{0}$ is an $M$-alternating cycle with
length at least $\nu/2+1$, again contradicting our assumption.
\end{proof}
\begin{theorem} \label{th41} Let $G$ be a graph with
$\kappa\geq\nu/2$ and $M$ a perfect matching of $G$. Then either
$G$ has an $M$-alternating Hamilton cycle or $G\in\mathcal{G}_1$
and $M$ is the jointing matching of $G$. \end{theorem}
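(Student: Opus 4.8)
The plan is to start from Lemma~\ref{lm41}, which guarantees a (not necessarily Hamilton) $M$-alternating cycle $C=u_0u_1\ldots u_{2m-1}u_0$ of length at least $\nu/2+1$; I would take $C$ to be a \emph{longest} such $M$-alternating cycle, with $u_{2i-1}u_{2i}\in M$. If $|C|=\nu$ we are done, so assume $2m<\nu$ and set $G_1=G-C$. Since $2m\geq\nu/2+1$ we get $|G_1|=\nu-2m\leq\nu/2-1$. The first task is to understand the structure of $G_1$ together with its attachments to $C$, and the main workhorse will be Lemma~\ref{lm13}: for any matching-pair $\{u_{2i},u_{2i+1}\}$ on $C$ and any pair $v,w$ that are endpoints of a closed $M$-alternating path in $G_1$, we have $e(\{u_{2i},u_{2i+1}\},\{v,w\})\leq 2$ in the non-bipartite case. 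The high connectivity $\kappa\geq\nu/2$, hence $\delta\geq\nu/2$, forces every vertex of $G_1$ to have many neighbours, and since $|G_1|$ is small most of those neighbours must lie on $C$.

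The core of the argument is to pick a vertex $v_0\in G_1$ and a \emph{maximal} closed $M$-alternating path $P=v_0v_1\ldots v_{2p-1}$ inside $G_1$, exactly as in the proof of Theorem~\ref{th21}. Maximality forces the endpoints to have all their $G_1$-neighbours inside $P$, so $d_1(v_0),d_1(v_{2p-1})\leq p\leq|G_1|/2$. Then I count $d(v_0)+d(v_{2p-1})$ by splitting each degree into its part on $C$ and its part in $G_1$. For the part on $C$, the longest-cycle maximality of $C$ (via the cycle-insertion argument of Lemma~\ref{lm13}) bounds $e(\{v_0,v_{2p-1}\},\{u_{2i},u_{2i+1}\})\leq 2$ per matching-pair, giving roughly $|N_C(v_0)|+|N_C(v_{2p-1})|\leq |C|$ plus a small correction coming from the segment between the extreme neighbours (the same ``shortest arc $P_1$'' device used in Theorem~\ref{th21}, which yields $|P|\leq|P_1|-2$). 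Combining the $C$-bound with $d_1(v_0)+d_1(v_{2p-1})\leq 2p\leq|G_1|$ should produce $d(v_0)+d(v_{2p-1})<\nu$, contradicting $\delta\geq\nu/2$ --- \emph{unless} the inequalities are all tight. This is the crucial difference from the bipartite theorem: here the bound $\nu$ is met with equality rather than exceeded, so instead of an outright contradiction I expect to be pushed into a rigid extremal configuration.

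The hard part, and the real content of the theorem, will be analysing that equality case and showing it forces $G\in\mathcal{G}_1$ with $M$ its jointing matching. Tightness everywhere should mean: $G_1$ is itself a clique on an odd number of vertices matched into $C$ in the tightest possible way, $e(\{u_{2i},u_{2i+1}\},\{v,w\})=2$ for every pair (so $C$ together with $G_1$ looks like two complete ``halves'' joined edge-by-edge through $M$), and no closed $M$-alternating path can thread $G_1$ through $C$ to lengthen $C$. I would argue that the only way to simultaneously have $\delta\geq\nu/2$, no longer $M$-alternating cycle, and no $M$-alternating Hamilton cycle is for the matching edges to form a perfect ``bridge'' between two copies of $K_{2n+1}$, which is precisely the definition of $\mathcal{G}_1$; in particular $M$ must be the jointing matching since any matching edge not of bridge type would create a longer alternating cycle.

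The plausible shape of the final argument is therefore: establish the degree inequality, observe it cannot be strict, and then reverse-engineer the equality conditions into the two-clique-plus-perfect-bridge structure. I would carry the steps in this order: (i) fix a longest $M$-alternating cycle $C$ and reduce to $2m<\nu$; (ii) run the maximal-path/degree-sum computation to get $d(v_0)+d(v_{2p-1})\leq\nu$; (iii) conclude either a contradiction with $\delta\geq\nu/2$, or total equality; (iv) translate equality into structural facts about $G_1$, the attachment pattern to $C$, and the matching, identifying $\mathcal{G}_1$. I expect step (iv) --- pinning down the extremal graph from the equality case --- to be the main obstacle, since it requires ruling out every near-extremal configuration that is not the two-clique construction, and making sure the perfect matching is forced to be exactly the jointing matching rather than some other alternating-cycle-free matching.
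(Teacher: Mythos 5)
Your plan breaks down at step (ii), the degree count, because the bound $d_1(v_0),d_1(v_{2p-1})\leq p$ is a \emph{bipartite} fact that does not transfer. In Theorem \ref{th21} the endvertex $v_{2p-1}\in B$ of a maximal closed $M$-alternating path in $G_1$ can only be adjacent to the $p$ vertices of $A\cap V(P)$; in a general graph maximality only gives $N_{G_1}(v_{2p-1})\subseteq V(P)$, i.e.\ $d_1(v_{2p-1})\leq 2p-1$. Your count then yields only $d(v_0)+d(v_{2p-1})\leq 2m+2(2p-1)\leq 2m+2(\nu-2m)-2$, which can exceed $\nu$ by roughly $\nu/2$, so you get neither a contradiction nor a forced equality case. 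The paper's proof goes in the opposite direction: it first shows $G-C$ is connected, then uses Lemma \ref{lm13} to get the \emph{upper} bound $|N_C(w)|+|N_C(z)|\leq 2m$ for endvertices $w,z$ of any closed $M$-alternating path in $G_1$, which combined with $\delta\geq\nu/2$ gives the \emph{lower} bound $d_1(w)+d_1(z)\geq|G_1|$; this is exactly the hypothesis of Theorem \ref{th31} applied to $G_1$ with $M_1=M\setminus E(C)$, producing a spanning closed $M$-alternating path $P$ of $G_1$. That invocation of Theorem \ref{th31} inside $G_1$ is the missing key idea, and all of the subsequent structure analysis is organized around how the even- and odd-indexed vertices of this spanning path attach to the matched pairs of $C$.

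Two further gaps: your plan never uses connectivity beyond $\delta\geq\nu/2$, whereas the paper needs genuine cut-set arguments (e.g.\ if $|N_C(V_0)|<m$ then $N_C(V_0)\cup V_1$ is a cut set of size less than $q+m=\nu/2$, contradicting $\kappa\geq\nu/2$; similar arguments kill Cases 2.1.1 and 2.1.2). And your anticipated extremal picture is wrong: since $G_1$ is covered by $M\setminus E(C)$ it has an even number of vertices, and in the actual extremal graphs $\mathcal{G}_1$ a longest $M$-alternating cycle uses an even number of the $2n+1$ bridge edges, so it misses exactly one matching edge and $G_1$ consists of a single edge of $M$. Pinning this down (the paper's Case 2.2.2, via its Claim 2 and a rotation to a second longest cycle $C'$) is where the two-clique structure actually emerges, and none of the near-extremal configurations you would need to rule out are addressed by the equality analysis you sketch.
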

\begin{proof} Suppose that $G$ does not have an $M$-alternating Hamilton
cycle. Let $C=u_{0}u_{1}\ldots u_{2m-1}u_{0}$ be the longest
$M$-alternating cycle in $G$, where $u_{2i-1}u_{2i} \in M$ and
$m<\nu/2$. By $\kappa\geq\nu/2$ we have $\delta\geq\nu/2$.
$\newline\indent$Let $w\in V(G-C)$, we let
$N_{0}(w)=\{u_{2i}:u_{2i}\in N_{C}(w), 0\leq i\leq m-1\}$ and
$N_{1}(w)=\{u_{2i+1}:u_{2i+1}\in N_{C}(w), 0\leq i\leq m-1\}$. Let
$W\subseteq V(G-C)$, we let $N_{0}(W)=\{u_{2i}:u_{2i}\in N_{C}(W),
0\leq i\leq m-1\}$ and $N_{1}(W)=\{u_{2i+1}:u_{2i+1}\in N_{C}(W),
0\leq i\leq m-1\}$.
$\newline\indent$Firstly we prove that $G-C$ is connected. Suppose
to the contrary that there are at least two components in $G-C$,
say $G_{1}$ and $G_{2}$ with $|G_{1}|\leq |G_{2}|$. There is at
least one edge $v_{0}v_{1}\in M \cap E(G_{1})$. By Lemma
\ref{lm13} $e(\{u_{2i},u_{2i+1}\},\{v_{0},v_{1}\})\leq 2$ for
every $0\leq i \leq m-1$. So $|N_{C}(v_{0})|+|N_{C}(v_{1})|\leq
2m$. Let $d_{1}(v)$ denote the degree of $v\in V(G_{1})$ in
$G_{1}$. Then
$$d(v_{0})+d(v_{1})=d_{1}(v_{0})+d_{1}(v_{1})+|N_{C}(v_{0})|+|N_{C}(v_{1})|\leq
2(|G_{1}|-1)+2m \leq |G_{1}|+|G_{2}|-2+2m \leq \nu-2,$$
contradicting $d(v_{0})+d(v_{1})\geq \nu$. Hence $G-C$ is
connected. Let $G_{1}=G-C$.
$\newline\indent$Consider any closed $M$-alternating paths in
$G_{1}$ with endvertices $w$ and $z$. By Lemma \ref{lm13},
$e(\{u_{2i},u_{2i+1}\},\{w,z\})\leq 2$ for every $0\leq i\leq
m-1$. Thus
$$|N_{C}(w)|+|N_{C}(z)|\leq 2m.$$
$\indent$Since
$|N_{C}(w)|+|N_{C}(z)|+d_{1}(w)+d_{1}(z)=d(w)+d(z)\geq \nu$, we
have
$$d_{1}(w)+d_{1}(z)\geq \nu - (|N_{C}(w)|+|N_{C}(z)|) \geq
\nu-2m=|G_{1}|.$$
$\indent$Let $M_{1}=M-E(C)$, then $M_{1}$ is a perfect matching of
$G_{1}$ and any closed $M$-alternating path in $G_{1}$ is a closed
$M_{1}$-alternating path. $G_{1}$ with $M_{1}$ satisfies the
condition of Theorem \ref{th31}, so there is a closed
$M_{1}$-alternating Hamilton path in $G_{1}$, or equally, a closed
$M$-alternating path in $G$ containing all vertices in $G_{1}$.
Let such a path be $P=v_{0}v_{1}\ldots v_{2q-1}$, where
$2q=\nu-2m$. We have the following cases to discuss.
$\newline$\textbf{Case 1.} There exist $r$, $s$, $0\leq r,s \leq
q-1$, such that there are no closed $M$-alternating path in
$G_{1}$ connecting $v_{2r}$ and $v_{2s+1}$.
$\newline\indent$Obviously $2s+1<2r$, or $v_{2r}Pv_{2s+1}$ is a
closed $M$-alternating path in $G_{1}$ connecting $v_{2r}$ and
$v_{2s+1}$. Thus we have $s<r$ and $|G_{1}|\geq 4$. Consider
$v_{2s}$ and $v_{2r+1}$. They are the endvertices of a closed
$M$-alternating path in $G_{1}$. By the discussion above,
\begin{equation}\label{eq41} d_{1}(v_{2s})+d_{1}(v_{2r+1})\geq
|G_{1}|=2q.\end{equation}
$\indent$For any vertex set $\{v_{2i},v_{2i+1}\}$, $0\leq i \leq
q-1$, $i\neq r,s$, if $v_{2s}v_{2i+1}, v_{2i}v_{2r+1}\in E(G)$,
then $$v_{2s+1}v_{2s}v_{2i+1}v_{2i}v_{2r+1}v_{2r}$$ is a closed
$M$-alternating path in $G_{1}$ connecting $v_{2r}$ and
$v_{2s+1}$, contradicting the assumption of Case 1. So
$|\{v_{2s}v_{2i+1}, v_{2i}v_{2r+1}\}\cap E(G)|\leq 1$. Similarly
$|\{v_{2s}v_{2i}, v_{2i+1}v_{2r+1}\}\cap E(G)|\leq 1$. So
$$e(\{v_{2s},v_{2r+1}\},\{v_{2i},v_{2i+1}\})\leq 2.$$
Furthermore, $v_{2s}$ and $v_{2r+1}$ cannot be adjacent or
$v_{2s+1}v_{2s}v_{2r+1}v_{2r}$ is a closed $M$-alternating path in
$G_{1}$ connecting $v_{2r}$ and $v_{2s+1}$. So
\begin{equation}\label{eq42} d_{1}(v_{2s})+d_{1}(v_{2r+1}) \leq 2(q-2) + 4=2q.\end{equation}
Thus equalities in (\ref{eq41}) and (\ref{eq42}) must hold.
Furthermore $|N_{C}(v_{2s})|+|N_{C}(v_{2r+1})|=2m$ and
$$e(\{u_{2j},u_{2j+1}\},\{v_{2s},v_{2r+1}\})= 2$$ for every $0\leq j
\leq m-1$.
$\newline\indent$We classify the sets $\{u_{2j},u_{2j+1}\}$,
$0\leq j \leq m-1$ into four classes, by the distribution of the 2
edges between $\{u_{2j},u_{2j+1}\}$ and $\{v_{2s},v_{2r+1}\}$.
That is,
$$\{u_{2j},u_{2j+1}\}\in \left\{
\begin{array}
{l l}
\mathscr{C}_{1}, & \mbox{if } u_{2j}v_{2s}, u_{2j+1}v_{2s}\in E(G), \\
\mathscr{C}_{2}, & \mbox{if } u_{2j}v_{2s}, u_{2j}v_{2r+1}\in E(G), \\
\mathscr{C}_{3}, & \mbox{if } u_{2j+1}v_{2s}, u_{2j+1}v_{2r+1}\in E(G), \\
\mathscr{C}_{4}, & \mbox{if } u_{2j}v_{2r+1}, u_{2j+1}v_{2r+1}\in
E(G).
\end{array}\right.$$
$\newline$Let $|\mathscr{C}_{i}| = t_{i}$, $1\leq i \leq 4$. We
have $t_{1}+t_{2}+t_{3}+t_{4}=m$,
$|N_{C}(v_{2s})|=t_{2}+t_{3}+2t_{1}$,
$|N_{C}(v_{2r+1})|=t_{2}+t_{3}+2t_{4}$,
$|N_{0}(v_{2s})|=t_{1}+t_{2}$, $|N_{1}(v_{2s})|=t_{1}+t_{3}$,
$|N_{0}(v_{2r+1})|=t_{2}+t_{4}$ and
$|N_{1}(v_{2r+1})|=t_{3}+t_{4}$.
$\newline$\textbf{Case 1.1.} $t_{2}$ or $t_{3}\neq 0$. Without
loss of generality let $t_{2}> 0$.
$\newline$\textbf{Case 1.1.1.} $t_{2}=m$.
$\newline\indent$For any $0\leq i < j \leq m-1$,
$u_{2i+1}u_{2j+1}\notin E(G)$, or
$u_{2j}v_{2s}Pv_{2r+1}u_{2i}C^{-}u_{2j+1}u_{2i+1}C^{+}u_{2j}$ is
an $M$-alternating cycle longer than $C$, a contradiction.
Therefore any $u_{2l+1}$, $0\leq l \leq m-1$, has at most
$|C|/2=m$ neighbors on $C$. Thus $|N_{G_{1}}(u_{2l+1})|\geq
d(u_{2l+1})-m\geq \nu /2-m= q$.
$\newline\indent$Since $u_{2l}$ is adjacent to $v_{2r+1}$,
$u_{2l+1}$ cannot be adjacent to any vertex $v_{2i}$, $0\leq
2i\leq 2r$, or $u_{2l+1}v_{2i}Pv_{2r+1}u_{2l}C^{-}u_{2l+1}$ is an
$M$-alternating cycle longer than $C$, a contradiction. Similarly,
$u_{2l+1}$ cannot be adjacent to any vertex $v_{2j+1}$, $2s+1\leq
2j+1\leq 2q-1$. So $N_{G_{1}}(u_{2l+1})\leq
2q-(r+1)-(q-s)=q-(r+1-s)\leq q-2$, contradicting
$N_{G_{1}}(u_{2l+1})\geq q$.
$\newline$ \textbf{Case 1.1.2.} $0<t_{2}<m$.
$\newline\indent$There exists an integer $h$, $0\leq h \leq m-1$,
such that $\{u_{2h},u_{2h+1}\}\in \mathscr{C}_{2}$, while
$\{u_{2h+2},u_{2h+3}\}\in \mathscr{C}_{i}$, $i=1$, 3 or 4. Then
$u_{2h+3}$ is adjacent to $v_{2s}$ or $v_{2r+1}$. Without loss of
generality assume that $u_{2h+3}v_{2s}\in E(G)$. Since $v_{2s}P
v_{2r+1}$ has length greater or equal to 3. The $M$-alternating
cycle $u_{2h}v_{2r+1}Pv_{2s}u_{2h+3}C^{+}u_{2h}$ is longer than
$C$, contradicting the maximality of $C$.
$\newline$ \textbf{Case 1.2.} $t_{2}=t_{3}=0$.
$\newline\indent$If $t_{1}\neq 0 \neq t_{4}$, then there exists an
integer $h$, $0\leq h \leq m-1$, such that $\{u_{2h},u_{2h+1}\}\in
\mathscr{C}_{1}$ and $\{u_{2h+2},u_{2h+3}\} \in \mathscr{C}_{4}$.
Similar to Case 1.1.2 we get an $M$-alternating cycle
$u_{2h}v_{2s}Pv_{2r+1}u_{2h+3}C^{+}u_{2h}$ which is longer than
$C$, a contradiction.
$\newline\indent$If $t_{1}$ or $t_{4}=0$, say $t_{1}=0$, then
$t_{4}=m$ and $N_{C}(v_{2s})=0$. By Lemma \ref{lm41}, $|C|\geq
\nu/2+1$, hence $d(v_{2s})\leq \nu-1-|V(C)| \leq \nu/2-2$,
contradicting $d(v_{2s})\geq \nu/2$.
$\newline$ \textbf{Case 2.} For any vertex set
$\{v_{2i},v_{2j+1}\}$, $0\leq i,j\leq q-1$, there is a closed
$M$-alternating path in $G_{1}$ connecting them.
$\newline\indent$Let $V_{0}=\{v_{2i}:v_{2i}\in V(P)\}$ and
$V_{1}=\{v_{2i+1}:v_{2i+1}\in V(P)\}$. For any vertex set
$\{u_{2l},u_{2l+1}\}$, $0\leq l \leq m-1$, suppose that there
exist two integers $0\leq i,j \leq q-1$, $u_{2l}v_{2i}$,
$u_{2l+1}v_{2j+1} \in E(G)$. By the condition of Case 2 there is a
closed $M$-alternating path $P_{1}$ in $G_{1}$ connecting $v_{2i}$
and $v_{2j+1}$, thus we obtain an $M$-alternating cycle
$u_{2l}v_{2i}P_{1}v_{2j+1}u_{2l+1}C^{+}u_{2l}$ which is longer
than $C$, a contradiction. Therefore $u_{2l}\notin N_{C}(V_{0})$
or $u_{2l+1}\notin N_{C}(V_{1})$. Similarly $u_{2l}\notin
N_{C}(V_{1})$ or $u_{2l+1}\notin N_{C}(V_{0})$. Hence
\begin{equation}\label{eq43}|N_{C}(V_{0})\cap\{u_{2l},u_{2l+1}\}|+|N_{C}(V_{1})\cap\{u_{2l},u_{2l+1}\}|
\leq 2\end{equation}
and \begin{equation}\label{eq44}|N_{C}(V_{0})|+|N_{C}(V_{1})|\leq
2m.\end{equation}
$\indent$We classify all sets $\{u_{2l},u_{2l+1}\}$ for which the
equality in (\ref{eq43}) holds into four classes. Let
$$\{u_{2l},u_{2l+1}\}\in \left\{
\begin{array} {l l}
\mathscr{C}_{1}, & \mbox{if } u_{2l}, u_{2l+1} \mbox{ send edges
to
} V_{0}, \\
\mathscr{C}_{2}, & \mbox{if } u_{2l} \mbox{ sends edges to } V_{0}
\mbox{ and } V_{1}, \\
\mathscr{C}_{3}, & \mbox{if } u_{2l+1} \mbox{ sends edges to }
V_{0}
\mbox{ and } V_{1}, \\
\mathscr{C}_{4}, & \mbox{if } u_{2l}, u_{2l+1} \mbox{ send edges
to } V_{1}.
\end{array}
\right.
$$
$\indent$If $|N_{C}(V_{0})|<m$, then $N_{C}(V_{0})\cup V_{1}$ is a
cut set of $G$ with size less than $q+m=\nu/2$, contradicting
$\kappa(G)\geq \nu/2$. So $|N_{C}(V_{0})|\geq m$. Similarly
$|N_{C}(V_{1})|\geq m$. We then have
$|N_{C}(V_{0})|+|N_{C}(V_{1})|\geq 2m$. By (\ref{eq44}) the
equality must hold and $|N_{C}(V_{0})|=|N_{C}(V_{1})|=m$.
Meanwhile, for every vertex set $\{u_{2l}, u_{2l+1}\}$, $0\leq l
\leq m-1$, equality in (\ref{eq43}) must hold, so $\{u_{2l},
u_{2l+1}\}\in \mathscr{C}_{i}$, $i=1,2,3$ or 4.
$\newline\indent$Let $|\mathscr{C}_{i}|=t_{i}$, $1\leq i \leq 4$.
Then $t_{1}+t_{2}+t_{3}+t_{4}=m$, $|N_{0}(V_{0})|=t_{1}+t_{2}$,
$|N_{1}(V_{0})|=t_{1}+t_{3}$, $|N_{0}(V_{1})|=t_{2}+t_{4}$,
$|N_{1}(V_{1})|=t_{3}+t_{4}$,
$2t_{1}+t_{2}+t_{3}=|N_{C}(V_{0})|=m=|N_{C}(V_{1})|=2t_{4}+t_{2}+t_{3}$
and $t_{1}=t_{4}$.
$\newline$ \textbf{Claim 1.}
$e(N_{0}(V_{0})^{+},N_{0}(V_{1})^{+})=0$ and
$e(N_{1}(V_{0})^{-},N_{1}(V_{1})^{-})=0$.

Suppose the claim does not hold and there exist integers $r$, $s$,
$g$, $h$, $0\leq r, s \leq m-1$, $0\leq g, h\leq q-1$, such that
$u_{2r}v_{2g}\in E(G)$, $u_{2s}v_{2h+1}\in E(G)$ and
$u_{2r+1}u_{2s+1} \in E(G)$. By the condition of Case 2 there is a
closed $M$-alternating path $P_{2}$ in $G_{1}$ connecting $v_{2g}$
and $v_{2h+1}$. Then
$u_{2r}v_{2g}P_{2}v_{2h+1}u_{2s}C^{-}u_{2r+1}u_{2s+1}C^{+}u_{2r}$
is an $M$-alternating cycle longer than $C$, contradicting the
maximality of $C$. Thus $e(N_{0}(V_{0})^{+},N_{0}(V_{1})^{+})=0$.
Similarly $e(N_{1}(V_{0})^{-},N_{1}(V_{1})^{-})=0$ and Claim 1
holds. $\Box$
$\newline$\textbf{Case 2.1.} $t_{2}$ or $t_{3}>0$. Without loss of
generality suppose $t_{2}>0$.
$\newline$\textbf{Case 2.1.1.} $t_{2}=m$.
$\newline\indent$The vertex set $\{u_{2i}, 0 \leq i \leq m-1\}$ is
a cut set of $G$ with size $m<\nu/2$, contradicting $\kappa(G)
\geq \nu /2$.
$\newline$\textbf{Case 2.1.2.} $0<t_{2}<m$.
$\newline\indent$There must exist an $r$, such that $\{u_{2r},
u_{2r+1}\}\in \mathscr{C}_{2}$, $\{u_{2r+2}, u_{2r+3}\}\in
\mathscr{C}_{i}$, $i=1$, 3 or 4. Hence $u_{2r+3}$ sends some edges
to $V_{0}$ or $V_{1}$. Without loss of generality, suppose
$u_{2r+3}$ sends some edges to $V_{1}$, say $u_{2r+3}v_{2g+1}\in
E(G)$, $0\leq g \leq q-1$. Let $0\leq h\leq q-1$ be such that
$u_{2r}v_{2h}\in E(G)$. By the condition of Case 2, there is a
closed $M$-alternating path $P_{3}$ in $G_1$ connecting $v_{2h}$
and $v_{2g+1}$.
$\newline\indent$Now let's estimate the sum of the degrees of
$u_{2r+1}$ and $u_{2r+2}$.
$\newline\indent$Since $\{u_{2r},u_{2r+1}\}\in \mathscr{C}_{2}$,
$u_{2r+1}$ sends no edge to $G_{1}$, the number of vertices in
which is $2q$. Since $u_{2r+3}$ sends edges to $V_{1}$,
$\{u_{2r+2},u_{2r+3}\}\in \mathscr{C}_{3}$ or $\mathscr{C}_{4}$,
so $u_{2r+2}$ sends no edge to $V_{0}$, the number of vertices in
which is $q$.

Note that $u_{2r+1} \in N_{0}(V_{0})^{+} \cap N_{0}(V_{1})^{+}$,
by Claim 1, $u_{2r+1}$ cannot be adjacent to any other vertex in
$N_{0}(V_{0})^{+} \cup N_{0}(V_{1})^{+}$, the number of which is
equal to $|N_{0}(V_{0})\cup N_{0}(V_{1})|-1$, that is,
$t_{1}+t_{2}+t_{4}-1$.

If $u_{2r+3}$ sends no edge to $V_{0}$, then $u_{2r+2}\in
N_{1}(V_{1})^{-}$ and $u_{2r+2}\notin N_{1}(V_{0})^{-}$. By Claim
1, $u_{2r+2}$ cannot be adjacent to any vertex in
$N_{1}(V_{0})^{-}$, the number of which is $t_{1}+t_{3}$. If
$u_{2r+3}$ sends some edges to $V_{0}$, then $u_{2r+2}\in
N_{1}(V_{0})^{-} \cap N_{1}(V_{1})^{-}$. Again by Claim 1,
$u_{2r+2}$ cannot be adjacent to any other vertices in $N_1(V_0)^-
\cup N_1(V_1)^-$, the number of which is equal to $t_1+t_3+t_4-1$.

Suppose there exists an integer $l$, $0\leq l \leq m-1$, $l \neq
r, r+1$, such that $u_{2l}u_{2r+1}, u_{2l+1}u_{2r+2}\in E(G)$.
Then
$$u_{2r}v_{2h}P_{3}v_{2g+1}u_{2r+3}C^{+}u_{2l}u_{2r+1}u_{2r+2}u_{2l+1}C^{+}u_{2r}$$
is an $M$-alternating cycle longer than $C$, a contradiction. Thus
for any $0\leq i\leq m-1$, $i\neq r, r+1$, $u_{2i}u_{2r+1} \notin
E(G)$ or $u_{2i+1}u_{2r+2} \notin E(G)$.

Now we can calculate an upper bound for the sum of the degrees of
$u_{2r+1}$ and $u_{2r+2}$. If $u_{2r+3}$ sends no edge to $V_{0}$,
then
\begin{eqnarray}
d(u_{2r+1})+d(u_{2r+2}) &\leq &
2(\nu-1)-2q-q-(t_{1}+t_{2}+t_{4}-1)-(t_{1}+t_{3})-(m-2) \nonumber \\
&=& 2\nu-3q-m-(t_{1}+t_{2}+t_{3}+t_{4}-1+t_1) \nonumber \\
&=& \nu+(2q+2m)-3q-m-(m-1+t_{1}) \nonumber \\
&=& \nu-(q+t_{1}-1).\nonumber
\end{eqnarray}
If $u_{2r+3}$ sends some edges to $V_{0}$, then
\begin{eqnarray}
d(u_{2r+1})+d(u_{2r+2}) &\leq &
2(\nu-1)-2q-q-(t_{1}+t_{2}+t_{4}-1)-(t_1+t_3+t_4-1)-(m-2) \nonumber \\
&=& 2\nu-3q-m-(t_{1}+t_{2}+t_{3}+t_{4}-2+t_1+t_4) \nonumber \\
&=& \nu+(2q+2m)-3q-m-(m-2+t_{1}+t_4) \nonumber \\
&=& \nu-(q+t_{1}+t_4-2).\nonumber
\end{eqnarray}

Since $d(u_{2r+1})+d(u_{2r+2})\geq \nu$ we have $(q+t_{1}-1)\leq
0$ or $(q+t_1+t_4-2)\leq 0$. But since $q\geq 1$ and $t_1=t_4\geq
0$, in both cases we have $t_{4}=t_{1}=0$. Therefore, for any
$0\leq i\leq m-1$, $\{u_{2i},u_{2i+1}\}\in \mathscr{C}_{2}\cup
\mathscr{C}_{3}$, hence $|(N_{C}(V(G_1))\cap
\{u_{2i},u_{2i+1}\}|=1$. But then $|(N_{C}(V(G_1))|= m \leq
\nu/2-1<\nu/2$ and $N_{C}(V(G_1))$ is a cut set of $G$,
contradicting $\kappa(G)\geq \nu/2$.
$\newline$ \textbf{Case 2.2.} $t_{2}=t_{3}=0$. Then
$t_{4}=t_{1}=m/2$. So $m$ must be even.
$\newline$ \textbf{Claim 2.} For a segment
$u_{2l}u_{2l+1}u_{2l+2}u_{2l+3}$ of $C$, if
$\{u_{2l},u_{2l+1}\}\in \mathscr{C}_{1}$ and
$\{u_{2l+2},u_{2l+3}\}\in \mathscr{C}_{4}$
($\{u_{2l},u_{2l+1}\}\in \mathscr{C}_{4}$ and
$\{u_{2l+2},u_{2l+3}\}\in \mathscr{C}_{1}$), then the following
statements hold.
$\newline$ (a) $|N_{G_{1}}(u_{2l})|=1$ and
$|N_{G_{1}}(u_{2l+3})|=1$. The neighbors of $u_{2l}$ and
$u_{2l+3}$ in $G_{1}$ are the endvertices of an edge in $M$.
$\newline$ (b) $u_{2l+1}$ is adjacent to all vertices in $V_{0}$
($V_{1}$) and $u_{2l+2}$ is adjacent to all vertices in $V_{1}$
($V_{0}$).
$\newline$ (c) $u_{2l+1}$ is adjacent to all other vertices in
$N_{0}(V_{0})^{+}$ ($N_{0}(V_{1})^{+}$) and $u_{2l+2}$ is adjacent
to all other vertices in $N_{1}(V_{1})^{-}$ ($N_{1}(V_{0})^{-}$).

We only prove the situation that $\{u_{2l},u_{2l+1}\}\in
\mathscr{C}_{1}$ and $\{u_{2l+2},u_{2l+3}\}\in \mathscr{C}_{4}$,
for the other situation the results follow similarly. Let
$v_{2g}\in N_{G_{1}}(u_{2l})$ and $v_{2h+1}\in
N_{G_{1}}(u_{2l+3})$, $0\leq g,h\leq q-1$. By the condition of
Case 2 there is a closed $M$-alternating path $P_{4}$ in $G_{1}$
connecting $v_{2g}$ and $v_{2h+1}$. If $|P_{4}|>1$, then the
$M$-alternating cycle
$u_{2l}v_{2g}P_{4}v_{2h+1}u_{2l+3}C^{+}u_{2l}$ is longer than $C$,
a contradiction. So $P_{4}$ consists of exactly one edge in $M$
and $g=h$. Since $v_{2g}$ and $v_{2h+1}$ is randomly chosen we
have $|N_{G_{1}}(u_{2l})|=1$ and $|N_{G_{1}}(u_{2l+3})|=1$, thus
(a) is proved.
$\newline\indent$Similar to Case 2.1.2 we count the sum of the
degrees of $u_{2l+1}$ and $u_{2l+2}$. Since
$\{u_{2l},u_{2l+1}\}\in \mathscr{C}_{1}$, $u_{2l+1}$ cannot send
any edge to $V_{1}$, so $|N_{G_{1}}(u_{2l+1})|\leq q$. Similarly
$|N_{G_{1}}(u_{2l+2})|\leq q$. By Claim 1, $u_{2l+1}$ cannot be
adjacent to any vertex in $N_{0}(V_{1})^{+}$, the number of which
is $t_{2}+t_{4}=m/2$, and $u_{2l+2}$ cannot be adjacent to any
vertex in $N_{1}(V_{0})^{-}$, the number of which is
$t_{1}+t_{3}=m/2$. For any $\{u_{2i},u_{2i+1}\}$ where $0\leq
i\leq m-1$, $i \neq l$, $l+1$, if $u_{2l+1}u_{2i}\in E(G)$ and
$u_{2l+2}u_{2i+1}\in E(G)$, then the $M$-alternating cycle
$u_{2l}v_{2g}v_{2g+1}u_{2l+3}C^{+}u_{2i}u_{2l+1}u_{2l+2}u_{2i+1}C^{+}u_{2l}$
is longer than $C$, a contradiction. Thus for any
$\{u_{2i},u_{2i+1}\}$, $0\leq i\leq m-1$, $i \neq l$, $l+1$,
$u_{2l+1}u_{2i}\notin E(G)$ or $u_{2l+2}u_{2i+1}\notin E(G)$.
Therefore
$$d(u_{2l+1})+d(u_{2l+2})\leq
2q+2(2m-1)-(m/2+m/2)-(m-2)=2q+2m=\nu. $$
$\indent$But $d(u_{2l+1})+d(u_{2l+2})\geq \nu/2+\nu/2=\nu$, thus
all equalities must hold. Hence $|N_{G_{1}}(u_{2l+1})|=q$ and
$|N_{G_{1}}(u_{2l+2})|=q$ and (b) holds. Meanwhile, except those
we excluded above, $u_{2l+1}$ must be adjacent to all other
vertices. Therefore $u_{2l+1}$ must be adjacent to all other
vertices in $N_{0}(V_{0})^{+}$. Similarly $u_{2l+2}$ must be
adjacent to all other vertices in $N_{1}(V_{1})^{+}$ and (c)
holds. The proof of Claim 2 is complete. $\Box$
$\newline$\textbf{Case 2.2.1.} There exists an integer $r$, $0\leq
r \leq m-1$, such that
$\{u_{2r},u_{2r+1}\},\{u_{2r+2},u_{2r+3}\}\in \mathscr{C}_{1}$.
$\indent$We can choose $r$ so that $\{u_{2r},u_{2r+1}\},
\{u_{2r+2},u_{2r+3}\}\in \mathscr{C}_{1}$ and
$\{u_{2r+4},u_{2r+5}\}\in \mathscr{C}_{4}$. By Claim 2 (c) and
(a), $u_{2r+1}u_{2r+3}\in E(G)$ and $|N_{G_{1}}(u_{2r+2})|=1$. Let
$v_{2g}$, $v_{2h_{1}+1}$ and $v_{2h_{2}+1}$ be the neighbors of
$u_{2r}$, $u_{2r+4}$ and $u_{2r+5}$ in $G_{1}$. By the condition
of Case 2, there is a closed $M$-alternating path $P_{5}$ in
$G_{1}$ connecting $v_{2g}$ and $v_{2h_{1}+1}$, and a closed
$M$-alternating path $P_{6}$ in $G_{1}$ connecting $v_{2g}$ and
$v_{2h_{2}+1}$.
$\newline\indent$If $u_{2r+2}u_{2r+5}\in E(G)$, then the
$M$-alternating cycle
$$u_{2r+2}u_{2r+1}u_{2r+3}u_{2r+4}v_{2h_{1}+1}P_{5}v_{2g}u_{2r}C^{-}u_{2r+5}u_{2r+2}$$
is longer than $C$, a contradiction. So $u_{2r+2}u_{2r+5}\notin
E(G)$. By Claim 1, we have $u_{2r+2}u_{2r+4}\notin E(G)$.
$\newline\indent$If there exists an integer $l$, $0\leq l\leq
m-1$, $l\neq r+2$, such that $\{u_{2l},u_{2l+1}\}\in
\mathscr{C}_{4}$ and $u_{2r+2}u_{2l+1}\in E(G)$. By Claim 2,
$u_{2r+4}u_{2l}\in E(G)$. Then the $M$-alternating cycle
$$u_{2r+2}u_{2r+1}u_{2r+3}u_{2r+4}u_{2l}C^{-}u_{2r+5}v_{2h_{2}+1}P_{6}v_{2g}u_{2r}C^{-}u_{2l+1}u_{2r+2}$$
is longer than $C$, a contradiction. Thus for all
$\{u_{2l},u_{2l+1}\}\in \mathscr{C}_{4}$, $u_{2r+2}u_{2l+1}\notin
E(G)$. But since $u_{2l}\in N_{1}(V_{1})^{-}$ and $u_{2r+2}\in
N_{1}(V_{0})^{-}$, by Claim 1, we also have $u_{2r+2}u_{2l}\notin
E(G)$. Therefore $u_{2r+2}$ has at most $2m-1-m=m-1$ neighbors on
$C$. Thus $d(u_{2r+2})\leq m-1+1=m < \nu/2$, contradicting
$d(u_{2r+2})\geq\kappa \geq \nu/2$.
$\newline$ \textbf{Case 2.2.2.} There does not exist any integer
$i$, $0\leq i \leq m-1$, such that
$$\{u_{2i},u_{2i+1}\},\{u_{2i+2},u_{2i+3}\}\in \mathscr{C}_{1}.$$
$\indent$Since $t_{1}=t_{4}=m/2$, there can neither be any $j$,
$0\leq j \leq m-1$, such that
$$\{u_{2j},u_{2j+1}\},\{u_{2j+2},u_{2j+3}\}\in \mathscr{C}_{4}.$$
Thus the sets $\{u_{2i},u_{2i+1}\}$, $0\leq i \leq m-1$ belong to
$\mathscr{C}_{1}$ and $\mathscr{C}_{4}$ alternatively. Without
loss of generality suppose $\{u_{0},u_{1}\}\in\mathscr{C}_{1}$,
then $\{u_{4i},u_{4i+1}\}\in\mathscr{C}_{1}$ and
$\{u_{4i+2},u_{4i+3}\}\in\mathscr{C}_{4}$, for $0\leq i\leq
m/2-1$. Consider the segment $u_{4i}u_{4i+1}u_{4i+2}u_{4i+3}$. By
Claim 2 (b), $u_{4i+2}$ is adjacent to all vertices in $V_{1}$.
Consider the segment $u_{4i+2}u_{4i+3}u_{4i+4}u_{4i+5}$. By Claim
2 (a), $u_{4i+2}$ can have only one neighbor in $G_{1}$. Thus we
have $|G_{1}|=2$. $G_{1}$ consists of the edge $v_{0}v_{1}\in M$
only. $N_{C}(v_{0})=\{u_{4i}, u_{4i+1}: 0\leq i \leq m/2-1\}$ and
$N_{C}(v_{1})=\{u_{4i+2}, u_{4i+3}:0\leq i \leq m/2-1\}$.
$\newline\indent$For any segment $u_{4i}u_{4i+1}u_{4i+2}u_{4i+3}$
of $C$, we obtain another longest $M$-alternating cycle
$$C^{\prime}=u_{4i}v_{0}v_{1}u_{4i+3}C^{+}u_{4i}.$$ Let
$G_{1}^{\prime}=G-C^{\prime}$, which consists of the edge
$u_{4i+1}u_{4i+2}$ only. Note that when we get here, we have
dismissed all other cases. Therefore, $C^{\prime}$ and
$G_{1}^{\prime}$ must have structures similar to $C$ and $G_{1}$,
as we have stated in this case. Hence the vertices in the sets
$\{u_{4i},v_{0}\}, \{v_{1}, u_{4i+3}\}$ and $\{u_{2j},u_{2j+1}\}$,
$0\leq j\leq m-1$, $j\neq 2i, 2i+1$, are adjacent to $u_{4i+1}$
and $u_{4i+2}$ alternatively, according to their orders on
$C^{\prime}$. Thus we have $N(u_{4i+1})=\{u_{4i+2},u_{4i},
v_{0}\}\cup \{u_{4j},u_{4j+1}:0\leq j\leq m/2-1,j\neq i\}$ and
$N(u_{4i+2})=\{u_{4i+1},u_{4i+3}, v_{1}\}\cup
\{u_{4j+2},u_{4j+3}:0\leq j\leq m/2-1,j\neq i\}$. Analogous
discussion on any segment $u_{4i-2}u_{4i-1}u_{4i}u_{4i+1}$ and
$u_{4i+2}u_{4i+3}u_{4i+4}u_{4i+5}$ leads to the conclusion that
$N(u_{4i})=\{u_{4i-1},u_{4i+1}, v_{0}\}\cup
\{u_{4j},u_{4j+1}:0\leq j\leq m/2-1,j\neq i\}$ and
$N(u_{4i+3})=\{u_{4i+4},u_{4i+2}, v_{1}\}\cup
\{u_{4j+2},u_{4j+3}:0\leq j\leq m/2-1,j\neq i\}$.
$\newline\indent$ By the arbitrariness of $i$, we conclude that
all vertices $u_{4i}$ and $u_{4i+1}$, $ 0\leq i \leq m/2-1$, are
adjacent to each other. They, together with $v_{0}$, form a
complete graph $K_{m+1}$. Similarly, vertices $u_{4i+2}$ and
$u_{4i+3}$, $ 0\leq i \leq m/2-1$, with $v_{1}$, form a complete
graph $K_{m+1}$. These two complete graphs, together with the
edges in $M$, constitute $G$. Since $m$ is even, let $m=2n$ then
$|G|=4n+2$. Therefore $G\in \mathcal{G}_1$ and $M$ is exactly the
jointing matching.
\end{proof}
\begin{corollary} Let $G$ be a $k$-extendable graph with
$k\geq\nu/4$, and $M$ a perfect matching of $G$. Then $G$ has an
$M$-alternating Hamilton cycle. \end{corollary}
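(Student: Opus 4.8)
The plan is to combine the structural dichotomy of Theorem \ref{lm12} with the two existence results already proved, namely Theorem \ref{th21} for bipartite graphs and Theorem \ref{th41} for highly connected graphs. Since $G$ is $k$-extendable with $k\geq\nu/4$, Theorem \ref{lm12} guarantees that either $G$ is bipartite or $\kappa\geq 2k$, and I would handle these two cases separately.

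In the bipartite case I would simply verify the degree-sum hypothesis of Theorem \ref{th21}. By Theorem \ref{lm11} we have $\kappa\geq k+1$, and since $\kappa\leq\delta$, every vertex has degree at least $k+1\geq\nu/4+1$. Hence for any two vertices $x$ and $y$ lying in different parts, $d(x)+d(y)\geq 2(k+1)\geq\nu/2+2$, so Theorem \ref{th21} applies at once and $G$ has an $M$-alternating Hamilton cycle.

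In the non-bipartite case we get $\kappa\geq 2k\geq\nu/2$, so Theorem \ref{th41} applies and yields one of two outcomes: either $G$ already has an $M$-alternating Hamilton cycle, in which case we are done, or $G\in\mathcal{G}_1$ with $M$ the jointing matching. The crux of the argument, and the step I expect to be the main obstacle, is to show that no graph in $\mathcal{G}_1$ can be $k$-extendable when $k\geq\nu/4$, thereby producing a contradiction.

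To dispose of $\mathcal{G}_1$ I would exploit the parity forced by its construction: the two copies of $K_{2n+1}$ are both odd, and $\nu=4n+2$ makes $k\geq\nu/4$ mean $k\geq n+1$, while Theorem \ref{lm11} together with $\kappa=2n+1$ forces $k\leq 2n$. For each such $k$ I would exhibit a single matching $M'$ of size $k$ that does not extend to a perfect matching, using the fact that $M'$ extends if and only if $G-V(M')$ has a perfect matching. Concretely, I would choose $M'$ so that its $2k$ endpoints are $\{x_2,\ldots,x_{2n+1}\}$ (matched among themselves inside the complete $X$-side, giving $n$ edges) together with the $Y$-vertices $y_1,y_{4n+3-2k},\ldots,y_{2n+1}$ (matched among themselves inside the complete $Y$-side, giving $k-n$ edges), for a total of $k$ edges. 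The complement $G-V(M')$ then consists of the single vertex $x_1$ and the odd clique $\{y_2,\ldots,y_{4n+2-2k}\}$, two odd components with no edge between them (since $x_1$'s only cross neighbor $y_1$ has been deleted), hence no perfect matching. Thus $M'$ does not extend, contradicting $k$-extendability; this rules out $G\in\mathcal{G}_1$ and completes the proof.
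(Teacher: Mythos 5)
Your proposal is correct, and its overall architecture coincides with the paper's: the same dichotomy from Theorem \ref{lm12}, the same verification of the degree-sum hypothesis of Theorem \ref{th21} via Theorem \ref{lm11} in the bipartite case, and the same appeal to Theorem \ref{th41} when $\kappa\geq 2k\geq\nu/2$. The only divergence is in how the residual case $G\in\mathcal{G}_1$ is eliminated. The paper does this in one line: if $G\in\mathcal{G}_1$ then $\nu=4n+2$ forces $k\geq n+1$, so $\kappa\geq 2k\geq 2n+2$, which contradicts the fact that $G$ is $(2n+1)$-regular (hence $\kappa\leq\delta=2n+1$). You instead exhibit an explicit matching $M'$ of size $k$ whose removal isolates $x_1$ together with an odd clique, so that $M'$ does not extend to a perfect matching; your index bookkeeping checks out (the $Y$-side of $M'$ uses $2(k-n)$ vertices, the ranges are nonempty and disjoint from $y_1$ precisely because $n+1\leq k\leq 2n$, and $x_1$ has no neighbor left in $G-V(M')$). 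Your route costs more work but buys a genuinely stronger fact: it shows directly that no graph in $\mathcal{G}_1$ is even $(n+1)$-extendable, using only the definition of extendability, whereas the paper's argument leans on the hypothesis $\kappa\geq 2k$ and the regularity of $\mathcal{G}_1$ and yields only the contradiction needed. For the purposes of this corollary the paper's shortcut is all that is required, so if brevity matters you could replace your construction by the observation that $2k\geq 2n+2>2n+1=\delta\geq\kappa\geq 2k$.
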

\begin{proof} By Theorem \ref{lm12}, either $G$ is bipartite or $\kappa\geq
2k$. If $G$ is bipartite, then by Theorem \ref{lm11}, $\delta \geq
\kappa\geq k+1\geq \nu/4+1$. Hence, for any two vertices $x$ and
$y$ in different parts of $G$, $d(x)+d(y)\geq \nu/2+2$. By Theorem
\ref{th21}, $G$ has an $M$-alternating Hamilton cycle. If
$\kappa\geq 2k\geq \nu/2$, then by Theorem \ref{th41}, $G$ has an
$M$-alternating Hamilton cycle or $G\in \mathcal{G}_1$. If $G\in
\mathcal{G}_1$, then $|G|=4n+2$, $n\geq 1$, so $k\geq n+1$. Thus
$\kappa\geq 2k\geq 2n+2$. But $G$ is regular with degree $2n+1$, a
contradiction. So $G$ has an $M$-alternating Hamilton cycle.
\end{proof}

\section{Final Remark}
Theorem \ref{th41} is a special case of the following conjecture.
\begin{conjecture}(Lov\'{a}sz-Woodall) Let $L$ be a set of
$k$ independent edges in a $k$-connected graph $G$, if $k$ is even
or $G-L$ is connected, then $G$ has a cycle containing all the
edges of $L$.
\end{conjecture}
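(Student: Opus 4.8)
The plan is to attack the conjecture in the extremal, cycle-rerouting spirit that already underlies Theorem \ref{th41}, but first I would isolate the single genuine obstruction. A cycle meets every edge cut of $G$ in an even number of edges; hence if $L$ is itself an edge cut (so that $G-L$ is disconnected) and $k=|L|$ is odd, then no cycle can use all $k$ edges of $L$. The hypothesis ``$k$ even or $G-L$ connected'' excludes exactly this situation, so the real content is that this parity/cut obstruction is the \emph{only} one. The case $L=M$ a perfect matching with $\kappa\ge\nu/2$ is settled by Theorem \ref{th41}, and it is an honest model for the general statement: the exceptional family $\mathcal{G}_1$ is precisely the configuration in which $k=\nu/2$ is odd and $G-M$ splits into two copies of $K_{2n+1}$, that is, the very case the Lov\'asz--Woodall hypothesis forbids. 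So my first move would be to keep that dichotomy in mind and aim to prove that absence of the cut obstruction forces a cycle through $L$.

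Next, rather than reducing to the classical ``cycle through $k$ vertices'' theorem of Dirac---which fails here, since subdividing each edge of $L$ creates degree-two vertices and thereby destroys the $k$-connectivity---I would work with the edges of $L$ directly. I would argue by induction on $k$, taking the small cases $k\le 3$ (classical) as the base. The inductive engine is extremal: among all cycles of $G$ choose one, $C$, passing through the maximum number of edges of $L$, and suppose some $e=xy\in L$ is missed. Using $k$-connectivity I would produce many internally disjoint paths from $x$, from $y$, and from the rest of $G-C$ into $C$, and then splice $e$ into $C$ by a local reroute, in direct analogy with the cycle-lengthening moves of Lemma \ref{lm13} and Lemma \ref{lm14}; the crossing estimate $e(\{u_{2i},u_{2i+1}\},\{v,w\})\le 2$ used there is the prototype of the bound needed to guarantee that such a reroute exists once enough disjoint paths are available.

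The hard part, and the reason the full conjecture has resisted proof, is controlling what happens when local rerouting fails. Failure forces the missed edges of $L$, together with suitable arcs of $C$, to concentrate around a small separator, so that $L$ behaves like a piece of an edge cut; one must then invoke the global hypothesis to conclude, either deriving that $G-L$ is disconnected with $k$ odd (the excluded case) or that $G$ collapses onto an exceptional structure analogous to $\mathcal{G}_1$. Carrying the parity condition intact through the induction is delicate: after one edge has been absorbed into $C$, I must verify that the residual instance on the remaining $k-1$ edges still satisfies ``$k-1$ even or $G-L'$ connected,'' yet absorbing an edge can create or destroy exactly the cut the hypothesis is designed to rule out. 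I expect this separator-and-parity bookkeeping, rather than any single rerouting lemma, to be the principal obstacle, and it is here that the argument would require machinery genuinely beyond the perfect-matching setting of Theorem \ref{th41}.
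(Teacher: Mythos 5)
You should note first that the paper does not prove this statement at all: it is stated as an open conjecture (Lov\'asz--Woodall), and the authors only remark that Kawarabayashi's work \cite{KK1} is a first step toward it, with a complete proof still outstanding. Theorem \ref{th41} is merely the special case where $L=M$ is a perfect matching and $\kappa\geq\nu/2$. Against that backdrop, the sound part of your proposal is the framing: your parity observation is correct and can even be sharpened --- since $G$ is $k$-connected, every edge cut has size at least $k$, so if $G-L$ is disconnected the cut contained in $L$ must be all of $L$, and a cycle meets any edge cut an even number of times; hence the hypothesis ``$k$ even or $G-L$ connected'' excludes exactly the one obvious obstruction, and $\mathcal{G}_1$ with its jointing matching is indeed the excluded configuration. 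Your remark that subdividing the edges of $L$ destroys $k$-connectivity, so Dirac-type reductions fail, is also apt.

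But what you have written is a research plan, not a proof, and the gap is precisely where you yourself locate it: the entire inductive step is missing. You never state, let alone prove, a rerouting lemma valid for an arbitrary independent edge set $L$; Lemmas \ref{lm13} and \ref{lm14} depend essentially on the perfect matching $M$ and on the cycle being $M$-alternating (so that its vertices pair off into sets $\{u_{2i},u_{2i+1}\}$ joined by matching edges), and there is no analogous pairing structure when $C$ is a cycle through some edges of $L$ and the missed edge $e=xy$ lies in a graph with no global matching. The ``separator-and-parity bookkeeping'' you defer --- showing that failure of all local reroutes forces $L$ to sit inside an edge cut, and that the property ``$k-1$ even or $G-L'$ connected'' survives absorbing one edge --- is exactly the content of the conjecture, and the second half is not even plausible as stated, since absorbing an edge flips the parity of $k$ on every inductive step while the connectivity of $G$ relative to the smaller $L'$ exceeds $|L'|$, changing the problem's form. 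Since the conjecture was open when this paper was written (and its eventual resolution by Kawarabayashi runs through heavy structural machinery across several long papers), an outline at this level of generality cannot be credited as a proof; concretely, you must either supply the rerouting lemma and the failure analysis in full, or restrict to a provable special case such as Theorem \ref{th41}.
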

Professor Kawarabayashi has published \cite{KK1}, which is the
first step towards a solution for the conjecture. He is still
working for a whole proof of the conjecture when we finish the
current paper.

\section*{Acknowledgments}
We thank the referees for their careful reading and valuable
suggestions that help improving the paper.

\end{document}